\newtheorem{theorem}{Theorem}[section]
\newtheorem{lemma}[theorem]{Lemma}
\theoremstyle{definition}
\newtheorem{definition}[theorem]{Definition}
\theoremstyle{remark}
\newtheorem{remark}[theorem]{Remark}
\newtheorem{notation}[theorem]{Notation}
\numberwithin{equation}{section}
\theoremstyle{corollary}
\newtheorem{corollary}[theorem]{Corollary}
\newcommand{\map}{\mathbf{map}}
\newcommand{\homs}{\mathbf{hom}}
\newcommand{\sSet}{\mathbf{sSet}}
\newcommand{\Sp}{\mathbf{Sp}^{\Sigma}}
\newcommand{\Cat}{\mathbf{Cat}}
\newcommand{\Top}{\mathbf{Top}}
\newcommand{\C}{\mathbf{C}}
\newcommand{\D}{\mathbf{D}}
\newcommand{\A}{\mathbf{A}}
\newcommand{\B}{\mathbf{B}}
\newcommand{\W}{\mathbf{W}}
\newcommand{\M}{\mathbf{M}}
\newcommand{\sCat}{\mathbf{sCat}}
\newcommand{\K}{\mathcal{K}}
\newcommand{\N}{\mathrm{N}_{\bullet}}
\newcommand{\iso}{\mathrm{iso}}
\newcommand{\sing}{\mathrm{sing}}
\newcommand{\Ho}{\mathrm{Ho}}
\newcommand{\colim}{\mathrm{colim}}
\newcommand{\Ob}{\mathrm{Ob}}
\newcommand{\diag}{\mathrm{diag}}
\begin{document}

\title{Grothendiek's Homotopy Hypothesis }

%    Information for first author
\author{Amrani Ilias}
%    Address of record for the research reported here
\address{Department of Mathematics\\ Masaryk University\\ Czech Republic}
%    Current address
%\curraddr{Department of Mathematics and Statistics,
%Case Western Reserve University, Cleveland, Ohio 43403}
\email{ilias.amranifedotov@gmail.com}
\email{amrani@math.muni.cz}
%    \thanks will become a 1st page footnote.
%\thanks{The first author was supported in part by NSF Grant \#000000.}

%    Information for second author
%\author{Author Two}
%\address{Mathematical Research Section, School of Mathematical Sciences,
%Australian National University, Canberra ACT 2601, Australia}
%\email{two@maths.univ.edu.au}
%\thanks{Support information for the second author.}

%    General info
\subjclass[2000]{Primary 55U40, 55P10. Secondary 18F20, 18D25}

\date{October 26, 2011}

%\dedicatory{This paper is dedicated to our advisors.}

\keywords{Model Categories, $\infty$-groupoids}
\maketitle

\begin{abstract}
We construct a "diagonal" cofibrantly generated model structre on the category of simplicial objects in the category of topological categories  $\sCat_{\Top}$, which is the category of diagrams  
$[\mathbf{\Delta}^{op}, \Cat_{\Top}]$. Moreover, we prove that the diagonal model structures is left proper and cellular. We also prove that the category of $\infty-\mathbf{groupoids}$ (the full subcategory of topological categories) has a cofibrantly generated model structure and is Quillen equivalent to the model category of simplicial sets, which proves the Grothendieck's homotopy hypothesis. 
%Finally, we give an application of the Homotopy Hypothesis for 
%the decomposition of topological space.

\end{abstract}

\section*{Introduction and Results}

This article can be seen as a first application of the existence of a model structure on the category of small topological categories $\Cat_{\Top}$ \cite{Amrani1}, namely for proving the \textit{Grothendieck's homotopy hypothesis}. Before talking about homotopy hypothesis, we describe our first result related to the algebraic $\K$-theory. In \cite{waldhausen1983}, Waldhausen defined the $\K$-theory of a Waldhausen category $\W$ as homotopy groups of some groupe-like $\mathrm{E}_{\infty}$-space $\K(\W)$. He defined a sort of suspension which takes Waldhausen category $\W$ to a simplicial a Waldhausen category $\mathcal{S}_{\bullet}\W$. This category 
can be considered as a simplicial object in the category of small (topological) categories.
The algebraic $\K$-theory of a suspension $\K(\mathcal{S}_{\bullet}\W)$ is defined as the realization of the nerve taken degree-wise,
more precisely  $\K(\mathcal{S}_{\bullet}\W)= \diag \N w \mathcal{S}_{\bullet}\W$. What is important here
is the interpretation of $ \N w\D$ for a given category $\D$. Indeed, it is the coherent nerve of the (topological)
Dwyer-Kan localization of $w\D$ with respect to $w\D$, i.e., the coherent nerve of the  $\infty$-groupoid  $\mathrm{L}_{w\D}{w\D}:=w\D[w\D^{-1}]$. More precisely, we have a weak equivalence $\N w\D\rightarrow\widetilde{\N} \mathrm{L}_{w\D}{w\D}$ (under some good conditions) . In fact, for each topological category $\A$ we can associate its
underlying $\infty$-groupoid denoted by $\A'$.  
Our idea is to construct a model structure on $\sCat_{\Top}$  \ref{scattop} having the following property:  
$\A_{\bullet}\rightarrow\B_{\bullet}$ is a weak equivalence if and only if $\diag \widetilde{\N} \A_{\bullet}^{'} \rightarrow \diag \widetilde{\N} \B_{\bullet}^{'}$ is a weak equivalence of simplicial sets. 
In \cite{Amrani1}, we have proved that  there is a weak equivalence $k^{!}\widetilde{\N} \A_{\bullet}^{'}\rightarrow \widetilde{\N} \A_{\bullet}$. It means that the left Quillen endofunctor $k^{!}$ capture the  homotopy type of the underlying
$\infty$-groupoid associated to any topological category. Now, we can state our first result as follow\\\\
\textbf{Theorem A.}\ref{scattop}
\textit{There is a cofibrantly generated model structure on $\sCat_{\Top}$ (diagonal model structure) such that    $\A_{\bullet}\rightarrow\B_{\bullet}$ is a weak equivalence (fibration) if and only if $$ \diag~ k^{!}\widetilde{\N} \A_{\bullet}\rightarrow\diag~ k^{!}\widetilde{\N}\B_{\bullet}$$ is a weak equivalence (fibration
) in $\sSet$. Or equivalently, $$ \diag~ \widetilde{\N} \A^{'}_{\bullet}\rightarrow\diag~ \widetilde{\N}\B^{'}_{\bullet}$$  is a weak equivalence in $\sSet$. }\\
In the first section \ref{section1}, we construct a new model structure on 
$\sCat_{\Top}$. In \ref{remarque}, we explain why it is harder to prove the existence of such diagonal model structure on $\sCat_{\sSet}$.
In sections \ref{section2} and \ref{section3}, we prove in details the left properness and the cellularity of the new model structure on $\sCat_{\Top}$. \\
\textbf{Theorem B.}\ref{propgauche}
\textit{The new model structure on $\sCat_{\Top}$ is left proper.} \\
\textbf{Theorem C.}\ref{cellulairescatop}
\textit{The new model structure on $\sCat_{\Top}$ is cellular.} \\

Our goal was to construct the stable model category 
$\Sp(\sCat_{\Top}, S)$ of symmetric spectra over $\sCat_{\Top}$, with respect to some left quillen endofunctor  $S$ (suspension).
Unfortunately the category  $\sCat_{\Top}$ is not simplicial model category, but we believe that combining some technics from \cite{hovey2001} and \cite{dugger2001replacing} we can give an equivalente  model for  $\Sp(\sCat_{\Top}, \mathrm{S})$.\\
Section \ref{section4}, is quite independent from the previous sections. We prove that the category of topological categories which are also 
$\infty$-groupoids is a model category.\\
\textbf{Theorem D.}\ref{modelgroupoid}
\textit{There exists cofibrantly generated model structure on the category of $\infty$-goupoids (definition \ref{defgroup}),
where the weak equivalences are given by 
Dwyer-Kan equivalences.
} \\
Finally, we prove the ultimate theorem related to the \textit{Grothendieck's homotopy hypothesis}\\
\textbf{Theorem (Grothendieck's homotopy hypothesis).}\ref{equivalence}
\textit{The category of infinity groupoids is Quillen equivalent to the category of simplicial sets. 
} \\
%In the last section \ref{section5} we give a consequence of the previous theorem. Actually, we show that for every topological space $X$ has a very nice 
%decomposition as follows:\\
%\textbf{Theorem (F).}\ref{decomposition}
%\textit{ Any topological space $X$ is zig-zag equivalent to a topological space of the from 
%$$\bigsqcup_{x\in\pi_{0}(X)}\mathcal{B}A_{x}$$ 
%where $A_{x}$ is a homotopical group (means that the inverses are defined up to homotopy) and $\mathcal{B}$ is the bar construction. 
%} \\

\section{model structure}\label{section1}
We will use the same notations as in \cite{Amrani1}
\begin{notation}
~\\
\begin{enumerate}
\item We denote $\Top$ the category of compactly 
generated Hausdorff spaces.
\item $\sSet_{\mathbf{K}}$ denotes the model category on simplicial sets where the fibrant object are
Kan complexes.  $\sSet_{\mathbf{Q}}$ denotes the Joyal model structure where the fibrant objects are quasi-categories ($\infty$-categories).
\item The functor $k_{!}:\sSet_{\mathbf{K}}\rightarrow\sSet_{\mathbf{Q}}$ is defined as the left Kan extension
of the functor with takes $\Delta^{n}$ to the nerve of the groupoid with $n$ objects and only one isomorphism between each 2 objects. Moreover $k_{!}$ has a right adjoint denoted by $k^{!}$.  
\item
The composition of functors 
$$\xymatrix{\sSet_{\mathbf{K}}\ar[r]^{k_{!}}& \sSet_{\mathbf{Q}}\ar[r]^-{\Xi} &\Cat_{\sSet}\ar[r]^{|-|} & \Cat_{\Top}
}$$
is denoted by  $\Theta:\sSet\rightarrow\Cat_{\Top}.$ The composition $\Xi\circ k_{!}$ is denoted by $\widetilde{\Theta}$.
\item The composition 
$$\xymatrix{\Cat_{\Top}\ar[r]^-{\sing}& \Cat_{\sSet}\ar[r]^-{\widetilde{\N}} &\sSet_{\mathbf{Q}}\ar[r]^{k^{!}} & \sSet_{\mathbf{K}}
}$$
is denoted by $\Psi:\Cat_{\Top}\rightarrow\sSet.$  The composition $k^{!}\circ\widetilde{\N}$ is denoted by $\widetilde{\Psi}$.
\item $\sSet^{2}_{d}$ denotes the category of bisimplicial sets provided with the diagonal model structure
called also \textit{Moerdijk model structure}.
There is a Quillen equivalence:
$$\xymatrix{
\sSet_{\mathbf{K}}\ar@<1ex>[r]^-{\mathrm{d}_{\ast}} & \sSet^{2}_{d}\ar@<1ex>[l]^-{\diag} 
}$$
\item $\sSet^{2}_{pr}$ denotes the category of bisimplicial sets provided with the projective model structure. It is well known that every projective weak equivalence is a diagonal equivalence. 
\item The category $\Cat_{\sSet}$ is equipped with Bergner model structure \cite{bergner}, $\Cat_{\Top}$ is equipped with the model structure defined in \cite{Amrani1}. The functors $k_{!}$ \cite{Joyalcrm}, $ ~|-|$ and $\Xi$ are left Quillen funcors. The functors $k^{!}$ \cite{Joyalcrm}, $\sing$ and $\widetilde{\N}$ are right Quillen functors. Moreover, the adjunctions $(\Xi, \widetilde{\N})$ and $(|-|, \sing)$ are Quillen equivalences \cite{lurie}, \cite{Amrani1}.  
\item All objects in $\Cat_{\Top}$ are \textbf{fibrant}. The functor $\sing$ applied to a topological category is a fibrant simplicial category. 

 \end{enumerate}
\end{notation}

%\begin{remark}
%Les foncteurs $k^{!}$, $\sing$, $\Psi$ commute with coproducts. 
%\end{remark}

We should remind that $(\Theta,~\Psi)$ (resp.   $(\widetilde{\Theta},~\widetilde{\Psi})$)  is a Quillen adjunction because it is a composition of Quillen adjunctions \cite{Amrani1}. This adjoint pair is naturally extended  to an adjunction between  $\sSet^{2}$ and  $\sCat_{\Top}$ (resp. $\sCat_{\sSet}$) denoted by $\Theta_{\bullet},\Psi_{\bullet}$ (resp. $\widetilde{\Theta}_{\bullet},\widetilde{\Psi}_{\bullet}$) . Finally, we define the following adjunction:
$$\xymatrix{
\sSet\ar@<1ex>[r]^-{\mathrm{d}_{\ast}} & \sSet^{2} \ar@<1ex>[l]^-{\diag}\ar@<1ex>[r]^-{\Theta_{\bullet}} &\sCat_{\Top} \ar@<1ex>[l]^-{\Psi_{\bullet}}
}$$ 
Now, we can state the main theorem for this section:

\begin{theorem}[\textbf{A}]\label{scattop}
The adjunction  $(\Theta_{\bullet}\mathrm{d}_{\ast}, \diag\Psi_{\bullet})$ induces a cofibrantly generated model structure on  $\sCat_{\Top}$, where
\begin{enumerate}
\item a morphism $f:\C_{\bullet}\rightarrow \D_{\bullet}$ in $\sCat_{\Top}$  is a weak equivalence (fibration) if 
$$ \diag\Psi_{\bullet}f: \diag\Psi_{\bullet}\C_{\bullet}\rightarrow  \diag\Psi_{\bullet}\D_{\bullet}$$ is a weak equivalence (fibration) in $\sSet_{\mathbf{K}}$,
\item The generating acyclic cofibrations are given by $\Theta_{\bullet}\mathrm{d}_{\ast}\Lambda_{i}^{n}\rightarrow \Theta_{\bullet}\mathrm{d}_{\ast}\Delta^{n}$, for all $0\leq n$ and $0\leq i\leq n$,
\item The generating cofibrations are given by $\Theta_{\bullet}\mathrm{d}_{\ast}\partial\Delta^{n}\rightarrow \Theta_{\bullet}\mathrm{d}_{\ast}\Delta^{n}$, for all $0\leq n$. 
\end{enumerate}
\end{theorem}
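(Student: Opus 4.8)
The plan is to obtain this structure by right transfer (Kan's recognition principle) along the composite adjunction
$$L := \Theta_{\bullet}\mathrm{d}_{\ast}\ :\ \sSet_{\mathbf{K}}\ \rightleftarrows\ \sCat_{\Top}\ :\ \diag\Psi_{\bullet} =: R .$$
Since $\sSet_{\mathbf{K}}$ is cofibrantly generated with $I=\{\partial\Delta^{n}\hookrightarrow\Delta^{n}\}$ and $J=\{\Lambda^{n}_{i}\hookrightarrow\Delta^{n}\}$, the transfer lemma for cofibrantly generated model categories produces exactly the structure described in (1)--(3), with $LI$, $LJ$ as generating (acyclic) cofibrations and with weak equivalences and fibrations created by $R$, as soon as the following three points hold: (i) $\sCat_{\Top}$ is bicomplete; (ii) the domains of the maps in $LI$ and $LJ$ are small relative to $LI$-cell (resp. $LJ$-cell) complexes; (iii) every relative $LJ$-cell complex is sent by $R$ to a weak equivalence of Kan complexes. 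The proof then consists in checking (i)--(iii).

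Point (i) is immediate: $\sCat_{\Top}$ is the category of simplicial objects in $\Cat_{\Top}$, which is bicomplete by \cite{Amrani1}, and limits and colimits are computed degreewise. For (ii), note that in each simplicial degree the objects $\Theta_{\bullet}\mathrm{d}_{\ast}K$, for $K$ a boundary, horn or standard simplex, are topological categories of the form $\Theta(L)$ with $L$ a finite simplicial set; because $\Theta=|-|\circ\Xi\circ k_{!}$ takes a finite simplicial set to a ``finite'' topological category -- finitely many objects and mapping spaces of the homotopy type of finite CW complexes -- these are small relative to the cofibrations of $\Cat_{\Top}$, which is the smallness that underlies the cofibrant generation of $\Cat_{\Top}$ in \cite{Amrani1}. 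Hence $\Theta_{\bullet}\mathrm{d}_{\ast}K$ is small in $\sCat_{\Top}$ and the small object argument applies to $LI$ and $LJ$.

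The real content is point (iii). The starting observation is that $RL=\diag\,\Psi_{\bullet}\Theta_{\bullet}\mathrm{d}_{\ast}$ is naturally weakly equivalent to the identity of $\sSet_{\mathbf{K}}$. Indeed, degreewise $\Psi_{\bullet}\Theta_{\bullet}$ is the endofunctor $\Psi\Theta=k^{!}\widetilde{\N}\sing\,|-|\,\Xi\,k_{!}$ of $\sSet$, and by \cite{Amrani1} -- stringing together the (derived) units of the Quillen equivalences $(k_{!},k^{!})$, $(\Xi,\widetilde{\N})$ and $(|-|,\sing)$, using that $k_{!}$, $\Xi$ and $|-|$ preserve cofibrant objects, that every topological category is fibrant (so that no fibrant replacement intervenes on the $\Psi$-side), and the weak equivalence $k^{!}\widetilde{\N}\A_{\bullet}'\to\widetilde{\N}\A_{\bullet}$ recalled in the introduction -- one gets a natural weak equivalence $\mathrm{id}\to\Psi\Theta$ of simplicial sets. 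Applying this degreewise to a bisimplicial set gives a natural \emph{levelwise} weak equivalence $\mathrm{id}\to\Psi_{\bullet}\Theta_{\bullet}$, hence, after $\diag$ (which sends levelwise weak equivalences to weak equivalences) and using $\diag\,\mathrm{d}_{\ast}\simeq\mathrm{id}$ from the Quillen equivalence of point (6), a natural weak equivalence $\mathrm{id}\to RL$. In particular $R$ carries every generator $L(\Lambda^{n}_{i}\hookrightarrow\Delta^{n})$ to a weak equivalence. It remains to propagate this along the pushouts and transfinite compositions that build an arbitrary relative $LJ$-cell complex $f$: I would show that $R$ carries each such pushout to a homotopy pushout square in $\sSet_{\mathbf{K}}$ -- the cells $L\Delta^{n}$, $L\Lambda^{n}_{i}$ being cofibrant and every object of $\sCat_{\Top}$ being fibrant, so that $\Psi_{\bullet}$ (levelwise right Quillen) together with $\diag$ compute the correct homotopy type -- and that $R$ commutes, up to weak equivalence, with the transfinite composites in the cell tower. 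A homotopy cobase change of a weak equivalence being a weak equivalence and a transfinite composite of weak equivalences along cofibrations being a weak equivalence, it follows that $Rf$ is a weak equivalence, which is (iii).

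I expect point (iii), and specifically the commutation of the right adjoint $R$ with the cellular colimits up to homotopy, to be the main obstacle; this is precisely the place where working over $\Top$ rather than $\sSet$ is used in an essential way (compare remark \ref{remarque}), because it is the fibrancy of every topological category that lets $\Psi_{\bullet}$ compute the derived functor with no intervening fibrant replacement. Once (i)--(iii) are established, the transfer lemma yields the cofibrantly generated model structure with the stated generating (acyclic) cofibrations, and one recovers the formulation from the introduction in terms of $\diag\,\widetilde{\N}\A_{\bullet}'$ from the weak equivalence $k^{!}\widetilde{\N}\A_{\bullet}'\to\widetilde{\N}\A_{\bullet}$ recalled there.
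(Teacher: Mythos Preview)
Your overall strategy matches the paper's: right transfer along $(\Theta_{\bullet}\mathrm{d}_{\ast},\ \diag\Psi_{\bullet})$ via Lemma~\ref{lem1}, with the acyclicity condition (your (iii), the paper's condition (4)) as the only substantial point. One small correction: $(k_{!},k^{!})$ is only a Quillen adjunction, not a Quillen equivalence (see Notation (8)), so ``stringing together derived units of Quillen equivalences'' is not quite right. Your conclusion that $\mathrm{id}\to\Psi\Theta$ is a natural weak equivalence is still true, but it needs the extra input that $\Theta(X)$ is always an $\infty$-groupoid, so that $k^{!}\widetilde{\N}\sing\Theta(X)\to\widetilde{\N}\sing\Theta(X)$ is a trivial fibration. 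The paper bypasses this entirely and argues directly in Lemma~\ref{scattop3} via the explicit description of $\mathrm{d}_{\ast}\Lambda^{n}_{i}$ and $\mathrm{d}_{\ast}\Delta^{n}$ as coproducts of contractibles.

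The genuine gap is your justification for why $R$ sends the relevant pushouts to homotopy pushouts. The reason you offer---``cofibrant cells, fibrant objects, so $\Psi_{\bullet}$ (levelwise right Quillen) computes the correct homotopy type''---points the wrong way: right Quillen functors respect homotopy \emph{limits} of fibrant objects, not homotopy colimits, and fibrancy of the corners gives no a~priori control over how $\Psi$ interacts with a pushout. The paper's mechanism is concrete and exploits the specific shape of $\mathrm{d}_{\ast}$. In simplicial degree $m$, $\mathrm{d}_{\ast}\Lambda^{n}_{i}$ is a coproduct over $\Lambda^{n}_{i}(m)$ of contractible simplicial sets while $\mathrm{d}_{\ast}\Delta^{n}$ is a coproduct over $\Delta^{n}(m)$ of copies of $\Delta^{n}$; hence the pushout $\Theta_{\bullet}(B)\sqcup_{\Theta_{\bullet}(A)}\C_{\bullet}$ decomposes degreewise as a pushout along a \emph{trivial} cofibration in $\Cat_{\Top}$, disjoint union a free coproduct of extra summands $\Theta(\Delta^{n})$ (Lemma~\ref{scattop2}). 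Fibrancy is used only to produce a \emph{section} of that trivial cofibration, which forces $\Psi$ of it to be a monomorphism and hence a trivial cofibration in $\sSet_{\mathbf{K}}$; Lemma~\ref{scattop1} then compares $\Psi$ of the pushout with the pushout of the $\Psi$-images. This degreewise analysis, followed by $\diag$, is what yields (iii) (Corollary~\ref{scattop4}); your sketch does not supply this mechanism, and the reason you propose would not suffice.
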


We start with a useful lemma which gives us conditions to transfer a model structure by adjunction.
  \begin{lemma}\label{lem1}[\cite{worytkiewicz2007model}, proposition 3.4.1]
Consider an adjunction $$\xymatrix{
\M \ar@<1ex>[r]^-{G} & \C\ar@<1ex>[l]^-{F}
}$$
where $\M$ is a cofibrantly generated model category, with generating cofibrations $\mathrm{I}$ and 
generating trivial cofibrations $\mathrm{J}$. We pose 
\begin{itemize}
\item $\mathrm{W}$ the class of morphisms in $\C$ such the image by  $F$ is a weak equivalence in $\M$.
\item $\mathrm{F}$ the class of morphisms in $\C$ such the image by  $F$ is a fibration in $\M$.
\end{itemize}
We suppose that the following conditions are verified:
\begin{enumerate}
\item The domains  of $G (i)$ are small with respect to  $G (\mathrm{I} )$ for all $i \in \mathrm{I} $ and the domains of  $G(j)$ are small with respect to
$G(\mathrm{J} )$ for all $ j \in \mathrm{J}$.
\item The functor $F$ commutes  with directed colimits i.e.,  
$$F\colim (\lambda \rightarrow \C)= \colim F(\lambda\rightarrow \C).$$
\item Every transfinite composition of weak equivalences in  $\M$ is a weak equivalence.
\item The pushout of $G(j)$ by any morphism $f$ in  $\C$ is in  $\mathrm{W}.$
\end{enumerate}
Then $\C$ forms a model category with weak equivalences  (resp. fibrations) $\mathrm{W}$  (resp. $\mathrm{F})$. Moreover, it is cofibrantly generated with generating cofibrations $G(\mathrm{I})$ and generating trivial cofibrations $G(\mathrm{J})$.
 \end{lemma}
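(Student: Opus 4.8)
The plan is to deduce the statement from the standard recognition (transfer) theorem for cofibrantly generated model categories (e.g.\ Hovey, Theorem 2.1.19, or Hirschhorn, Theorem 11.3.1): given a bicomplete category $\C$, a class $\mathrm{W}$ with the two-out-of-three property and closed under retracts, and two sets of maps satisfying suitable smallness, cellularity and compatibility conditions, there results a cofibrantly generated model structure. I would take the candidate generating cofibrations to be $G(\mathrm{I})$ and the candidate generating trivial cofibrations to be $G(\mathrm{J})$, and verify each hypothesis of that theorem. The entire argument is driven by one adjunction identity: since $G\dashv F$, a map $p$ in $\C$ has the right lifting property against $G(s)$ if and only if $F(p)$ has the right lifting property against $s$. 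Consequently $G(\mathrm{I})\text{-inj}=\{p:F(p)\text{ is a trivial fibration in }\M\}$ and $G(\mathrm{J})\text{-inj}=\{p:F(p)\text{ is a fibration in }\M\}=\mathrm{F}$; in particular the prospective trivial fibrations are exactly the maps $p$ with $F(p)$ a trivial fibration, i.e.\ $\mathrm{W}\cap\mathrm{F}$.

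First I would dispatch the formal conditions. The class $\mathrm{W}=F^{-1}(\text{weak equivalences of }\M)$ inherits two-out-of-three and closure under retracts from $\M$, since $F$ preserves composition and retracts. The smallness of the domains of $G(\mathrm{I})$ and $G(\mathrm{J})$ relative to the corresponding cell complexes is precisely hypothesis (1), which is what permits the small object argument on each set. The inclusion $G(\mathrm{J})\text{-cof}\subseteq G(\mathrm{I})\text{-cof}$ follows again by adjunction: for $j\in\mathrm{J}$ and $p\in G(\mathrm{I})\text{-inj}$ the map $F(p)$ is a trivial fibration, so $j$ lifts against $F(p)$ and hence $G(j)$ lifts against $p$; saturation then gives $G(\mathrm{J})\text{-cof}\subseteq G(\mathrm{I})\text{-cof}$. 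The two remaining compatibility conditions are immediate from the identities above: $G(\mathrm{I})\text{-inj}\subseteq\mathrm{W}\cap G(\mathrm{J})\text{-inj}$ because a trivial fibration in $\M$ is both a weak equivalence and a fibration; and conversely $\mathrm{W}\cap G(\mathrm{J})\text{-inj}\subseteq G(\mathrm{I})\text{-inj}$ because if $F(p)$ is simultaneously a weak equivalence and a fibration then it is a trivial fibration.

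The crux, and the only place where hypotheses (2), (3) and (4) are used, is the acyclicity of the generating trivial cofibrations, namely $G(\mathrm{J})\text{-cell}\subseteq\mathrm{W}$. A relative $G(\mathrm{J})$-cell complex is a transfinite composition $X_0\to X_1\to\cdots\to X_\lambda$ in which each successor map $X_\beta\to X_{\beta+1}$ is a pushout of some $G(j)$, $j\in\mathrm{J}$; by hypothesis (4) every such successor map lies in $\mathrm{W}$, i.e.\ $F(X_\beta\to X_{\beta+1})$ is a weak equivalence in $\M$. By hypothesis (2) the functor $F$ commutes with the directed colimit defining the transfinite composition, so $F(X_0\to X_\lambda)$ is exactly the transfinite composition in $\M$ of the maps $F(X_\beta\to X_{\beta+1})$; by hypothesis (3) this composition of weak equivalences is again a weak equivalence, whence $X_0\to X_\lambda\in\mathrm{W}$. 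Combined with the inclusion of the previous paragraph, this yields $G(\mathrm{J})\text{-cell}\subseteq\mathrm{W}\cap G(\mathrm{I})\text{-cof}$. I expect this transfinite-composition step to be the main obstacle, as it is precisely where the homotopical behaviour of the pushed-forward trivial cofibrations must be controlled, and where the three technical hypotheses are indispensable.

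With all hypotheses verified, the recognition theorem produces the model structure on $\C$ with weak equivalences $\mathrm{W}$, cofibrations $G(\mathrm{I})\text{-cof}$, and fibrations $G(\mathrm{J})\text{-inj}=\mathrm{F}$; the small object argument applied to $G(\mathrm{I})$ and $G(\mathrm{J})$ supplies the two functorial factorizations, and cofibrant generation with generators $G(\mathrm{I})$ and $G(\mathrm{J})$ is built into the construction. Here I tacitly assume $\C$ is bicomplete, as is required of any model category and implicit in the setup.
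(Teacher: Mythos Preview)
The paper does not give its own proof of this lemma; it simply quotes it from \cite{worytkiewicz2007model}, Proposition~3.4.1, and uses it as a black box. So there is no argument in the paper to compare against.

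Your proof is correct and is exactly the standard route: reduce to a recognition theorem for cofibrantly generated model categories (Hovey~2.1.19 / Hirschhorn~11.3.1), identify $G(\mathrm{I})\text{-inj}$ and $G(\mathrm{J})\text{-inj}$ via the adjunction, and use hypotheses~(2)--(4) only to show $G(\mathrm{J})\text{-cell}\subseteq\mathrm{W}$. Two minor remarks. First, your caveat that $\C$ must be bicomplete is apt: the lemma as stated omits this, but the paper's applications supply it separately (e.g.\ in the proof of Theorem~\ref{scattop} the author first notes that $\sCat_{\Top}$ is complete and cocomplete). Second, in the transfinite-composition step you should also note that at limit ordinals $\beta$ one has $X_\beta=\colim_{\gamma<\beta}X_\gamma$, and hypothesis~(2) ensures $F(X_\beta)=\colim_{\gamma<\beta}F(X_\gamma)$ so that $F$ of the whole tower is again a transfinite composition in $\M$; you implicitly use this, and it is worth making explicit since hypothesis~(2) is otherwise invoked only for the final colimit.
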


In order to prove the main theorem \ref{scattop} we follow the lemma \ref{lem1}.
\begin{lemma}\label{scattop1}
Let $A$ a simplicial subset of $B$ such that the inclusion $A\rightarrow B$ is a weak equivalence. Let $\C$  an object in $\Cat_{\Top}$. Then for all $ F\in\homs_{\Cat_{\Top}}(\Theta(A),\C)$ the functor $\Psi$ sends the following pushout 
 $$
  \xymatrix{
\Theta (A)\ar[r]^{F}\ar[d]  & \C\ar[d] \\
   \Theta (B)  \ar[r] & \D
  }
  $$
to a homotopy cocartesian square in $\sSet.$
\end{lemma}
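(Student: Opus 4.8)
The plan is to reduce the statement to a statement about the left Quillen functor $\Theta$ and the fact that pushouts of weak equivalences along cofibrations between cofibrant objects are weak equivalences (left properness-type behavior), combined with knowledge of how $\Psi$ sees pushouts. First I would recall that $\Theta = |-|\circ\Xi\circ k_!$ is a left Quillen functor $\sSet_{\mathbf{K}}\to\Cat_{\Top}$, so it preserves cofibrations and, since every object of $\sSet_{\mathbf{K}}$ is cofibrant, it preserves the cofibration $\Theta(A)\to\Theta(B)$ and preserves acyclic cofibrations; thus when $A\to B$ is a weak equivalence, $\Theta(A)\to\Theta(B)$ is an acyclic cofibration in $\Cat_{\Top}$. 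Consequently its pushout $\C\to\D$ along any $F$ is again an acyclic cofibration in $\Cat_{\Top}$, hence in particular a weak equivalence in $\Cat_{\Top}$.

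Next I would push this through $\Psi = k^{!}\circ\widetilde{\N}\circ\sing$. Since $\sing$, $\widetilde{\N}$, and $k^{!}$ are right Quillen functors and all objects of $\Cat_{\Top}$ are fibrant (so $\C$ and $\D$ are fibrant and $\C\to\D$ is a weak equivalence between fibrant objects), the composite $\Psi$ sends the weak equivalence $\C\to\D$ to a weak equivalence $\Psi\C\to\Psi\D$ in $\sSet_{\mathbf{K}}$; here one uses Ken Brown's lemma for the right Quillen functors applied to weak equivalences between fibrant objects. Similarly $\Psi\Theta(A)\to\Psi\Theta(B)$ is a weak equivalence. A square in which both vertical maps are weak equivalences is automatically homotopy cocartesian (in any model category, a square with two parallel weak equivalences is homotopy cocartesian iff it is homotopy cartesian, and such a square is always both), so the image square under $\Psi$ is homotopy cocartesian in $\sSet_{\mathbf{K}}$, which is what we want.

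The one point that needs care — and what I expect to be the main obstacle — is justifying that the image of the \emph{square} under $\Psi$ is genuinely the pushout-shaped diagram we claim, i.e. that $\Psi$ (a right adjoint) does not a priori preserve the pushout, so one cannot argue "$\Psi$ of a pushout square is a pushout square." This is exactly why the argument must instead go through the observation that both vertical arrows become weak equivalences: a commuting square of the form
$$
\xymatrix{
\Psi\Theta(A)\ar[r]\ar[d]_{\sim} & \Psi\C\ar[d]^{\sim}\\
\Psi\Theta(B)\ar[r] & \Psi\D
}
$$
with the indicated maps weak equivalences is homotopy cocartesian regardless of whether it is a strict pushout. So the real content reduces to the two facts above: (i) $\Theta(A)\to\Theta(B)$ is an acyclic cofibration and hence so is its pushout $\C\to\D$ (left properness / closure of acyclic cofibrations under pushout), and (ii) $\Psi$ preserves weak equivalences between fibrant objects. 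I would also remark that $A$ being a \emph{subset} of $B$ is used only to ensure the inclusion is a cofibration in $\sSet_{\mathbf{K}}$, which is where $\Theta$'s preservation of cofibrations gets its input; everything else follows formally.
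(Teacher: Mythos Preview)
Your proof is correct and follows essentially the same route as the paper: both arguments show that $\Theta(A)\to\Theta(B)$ is an acyclic cofibration in $\Cat_{\Top}$, hence so is its pushout $\C\to\D$, and then use that $\Psi$ (a composite of right Quillen functors, with everything fibrant) sends these to weak equivalences in $\sSet_{\mathbf{K}}$, whence the image square is homotopy cocartesian. The only cosmetic difference is that the paper spells out the last step concretely---it observes that $\Psi\Theta(A)\to\Psi\Theta(B)$ is in fact a \emph{monomorphism} (because $\Theta(A)\to\Theta(B)$ admits a retraction, all objects being fibrant), forms the strict pushout $\Psi\Theta(B)\sqcup_{\Psi\Theta(A)}\Psi\C$, and checks by 2-out-of-3 that the comparison map to $\Psi\D$ is a weak equivalence---whereas you invoke directly the general fact that a square with two parallel weak equivalences is homotopy cocartesian; the paper's extra monomorphism observation is not needed for this lemma but is reused later.
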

\begin{proof}
Since $\Theta$  is a quillen functor, $\Theta (A)\rightarrow \Theta (B)$ is a trivial cofibration in $\Cat_{\Top}$.It implies that $\C\rightarrow\D$ is an equivalence in $\Cat_{\Top}$,
and so $\sing\C\rightarrow \sing\D$ is an equivalence between fibrant objects in $\Cat_{\sSet}$. It follows  that $\widetilde{\N}\sing\C\rightarrow \widetilde{\N}\sing\D$ is an equivalence between fibrant objects  (quasi-category) in
$\sSet_{\mathbf{Q}}$. Finally, $k^{!}\widetilde{\N}\sing\C\rightarrow k^{!}\widetilde{\N}\sing\D$ i.e., $\Psi\C\rightarrow \Psi \D$ is an equivalence in $\sSet_{\mathbf{K}}$.
By the same argument,  $\Psi \Theta(A)\rightarrow \Psi \Theta (B)$ is a weak equivalence in $\sSet_{\mathbf{K}}$. Moreover, it is a monomorphism since $\Theta(A)\rightarrow \Theta (B)$ admits a section (all objects in $\Cat_{\Top}$ are fibrant). So $\Psi \Theta(A)\rightarrow \Psi \Theta (B)$  is a trivial cofibration in $\sSet_{\mathbf{K}}$, consequently $\Psi\C\rightarrow \Psi \Theta(B)\sqcup_{\Psi \Theta(A)}\Psi\C$ is an equivalence in $\sSet_{\mathbf{K}}$. The following diagram summarize the situation:
  $$\xymatrix{
 \Psi \Theta(A)\ar[d]^{\sim} \ar[r]& \Psi\C \ar[d]^{\sim}\ar@/^/[rdd] ^{\sim} \\
    \Psi \Theta(B)\ar[r]  \ar@/_/[rrd] &\Psi \Theta(B)\sqcup_{\Psi \Theta(A)}\Psi\C  \ar@{.>}[rd]^{t} \\
     & &  \Psi\D
  }$$
  
we conclude that  $t: \Psi \Theta(B)\sqcup_{\Psi \Theta(A)}\Psi\C \rightarrow \Psi\D$ is a weak equivalence in  $\sSet_{\mathbf{K}}$
\end{proof}
More generally, we consider the following bisimplicial sets (cf \cite{goerss1999})  
$$B=\mathrm{d}_{\ast}\Delta^{n}=\bigsqcup_{\beta\in\Delta^{n} } \Delta^{n}.$$
$$A=\mathrm{d}_{\ast}\Lambda^{n}_{i}=\bigsqcup_{\beta\in\Lambda^{n}_{i} } C^{\beta}, \mathrm{where~ C^{\beta}~are~weakly~contractible.}$$ 
$$S=\mathrm{d}_{\ast}\partial\Delta^{n}=\bigsqcup_{\beta\in\partial\Delta^{n} } D^{\beta},  \mathrm{where~ D^{\beta}~are~weakly~contractible.}$$

\begin{lemma}\label{scattop2}
If $i:S\rightarrow B$ is a generating cofibration in $\sSet^{2}_{d}$ (resp. an acyclic generating cofibration
  $j:A\rightarrow B$  in $\sSet^{2}_{d}$ ) and $\C_{\bullet}$ an object of $\sCat_{\Top}$, then the functor  $\Psi_{\bullet}$ sends the following pushouts
$$
  \xymatrix{
\Theta_{\bullet} (S)\ar[r]\ar[d]  & \C_{\bullet}\ar[d] & & \Theta_{\bullet} (A)\ar[r]\ar[d]  & \C_{\bullet}\ar[d] \\
   \Theta_{\bullet} (B)  \ar[r] & \D_{\bullet} & &  \Theta_{\bullet} (B)  \ar[r] & \D_{\bullet}
  }
  $$
  to  homotopy cocartesian squares in $\sSet^{2}_{pr}$.
\end{lemma}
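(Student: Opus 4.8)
\emph{Sketch of the intended proof.} The plan is to reduce to a degreewise statement and then, in each simplicial degree, to factor $S\hookrightarrow B$ (resp.\ $A\hookrightarrow B$) into a weak equivalence followed by the inclusion of a subcoproduct, so that the two facts about $\Psi$ we can exploit --- Lemma~\ref{scattop1} and preservation of coproducts --- each apply to one half. Since colimits in $\sCat_{\Top}=[\mathbf{\Delta}^{op},\Cat_{\Top}]$ are formed degreewise, $\Theta_{\bullet}$ and $\Psi_{\bullet}$ act degreewise in the simplicial direction, the weak equivalences and fibrations of $\sSet^{2}_{pr}$ are degreewise, and homotopy pushouts in $\sSet^{2}_{pr}$ are computed degreewise (it is a functor-category model structure with degreewise weak equivalences), it is enough to prove: for each $q$, writing $K\hookrightarrow L$ for the degree-$q$ constituent of $S\hookrightarrow B$ (resp.\ of $A\hookrightarrow B$), $\C:=\C_{q}$ and $\D_{q}:=\Theta(L)\sqcup_{\Theta(K)}\C$, the functor $\Psi$ sends
\[
\xymatrix{\Theta(K)\ar[r]\ar[d] & \C\ar[d]\\ \Theta(L)\ar[r] & \D_{q}}
\]
to a homotopy cocartesian square in $\sSet_{\mathbf{K}}$.

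Next I unwind the bisimplicial combinatorics, using the descriptions recalled before the statement (cf.\ \cite{goerss1999}): in degree $q$, $B=\mathrm{d}_{\ast}\Delta^{n}$ is a coproduct $L=\bigsqcup_{\beta\in(\Delta^{n})_{q}}\Delta^{n}$ of copies of $\Delta^{n}$ indexed by the $q$-simplices of $\Delta^{n}$, while $K$ is the sub-simplicial set $\bigsqcup_{\beta\in J_{0}}C^{\beta}$ --- with $J_{0}\subseteq(\Delta^{n})_{q}$ the set of $q$-simplices of $\partial\Delta^{n}$ (resp.\ of $\Lambda^{n}_{i}$) and each $C^{\beta}$ a weakly contractible subcomplex of the $\beta$-th copy of $\Delta^{n}$ --- the map $K\hookrightarrow L$ being the evident inclusion. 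I factor it as
\[
K=\bigsqcup_{\beta\in J_{0}}C^{\beta}\ \xrightarrow{\ v\ }\ M:=\bigsqcup_{\beta\in J_{0}}\Delta^{n}\ \xrightarrow{\ w\ }\ \bigsqcup_{\beta\in(\Delta^{n})_{q}}\Delta^{n}=L,
\]
with $v$ a trivial cofibration in $\sSet_{\mathbf{K}}$ (a coproduct of the inclusions $C^{\beta}\hookrightarrow\Delta^{n}$, each a monomorphism between weakly contractible simplicial sets) and $w$ the inclusion of a subcoproduct.

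Correspondingly the pushout factors: $\D_{q}=\Theta(L)\sqcup_{\Theta(M)}\C'$ where $\C':=\Theta(M)\sqcup_{\Theta(K)}\C$, and since homotopy cocartesian squares paste together it suffices to treat the two resulting squares separately. The square arising from $\Theta(K)\to\Theta(M)$ is carried by $\Psi$ to a homotopy cocartesian square directly by Lemma~\ref{scattop1}, applied to the trivial cofibration of simplicial sets $v\colon K\hookrightarrow M$. For the square arising from $\Theta(M)\to\Theta(L)$: as $\Theta$ preserves coproducts, $\Theta(w)$ is the coproduct inclusion $\Theta(M)\hookrightarrow\Theta(M)\sqcup\bigl(\bigsqcup_{\beta\in(\Delta^{n})_{q}\setminus J_{0}}\Theta(\Delta^{n})\bigr)$, so pushing it out along $\Theta(M)\to\C'$ yields $\C'\sqcup\bigl(\bigsqcup_{\beta}\Theta(\Delta^{n})\bigr)$; applying $\Psi$, which preserves coproducts since each of $\sing$, $\widetilde{\N}$ and $k^{!}$ does (for $\widetilde{\N}$ and $k^{!}$ because the categories $\Xi(\Delta^{n})$ and the simplicial sets $k_{!}(\Delta^{n})$ are connected), turns the square into a pushout along a coproduct inclusion in $\sSet$ --- a homotopy pushout. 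Pasting the two homotopy cocartesian squares yields the degreewise claim, hence the lemma.

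The step I expect to be the real obstacle is the degreewise combinatorial analysis of $\mathrm{d}_{\ast}\partial\Delta^{n}$ and $\mathrm{d}_{\ast}\Lambda^{n}_{i}$ used in the second paragraph: one must check that in each simplicial degree these bisimplicial sets decompose as coproducts of \emph{weakly contractible} subcomplexes indexed exactly by the simplices of $\partial\Delta^{n}$ (resp.\ $\Lambda^{n}_{i}$) and lying inside the copies of $\Delta^{n}$ making up $\mathrm{d}_{\ast}\Delta^{n}$ --- this is what makes the factorization through $M$ available and lets both Lemma~\ref{scattop1} and the coproduct-preservation of $\Psi$ do their work. Everything else is formal: the pasting lemma for homotopy cocartesian squares, the coproduct-preservation of $\Psi$, and the degreewise computation of homotopy pushouts in $\sSet^{2}_{pr}$.
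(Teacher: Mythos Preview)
Your proposal is correct and follows essentially the same route as the paper. Both arguments reduce to a degreewise statement, split the degree-$m$ inclusion $S_m\hookrightarrow B_m$ into a trivial cofibration $S_m\hookrightarrow B'_m$ (your $K\hookrightarrow M$) followed by a coproduct inclusion $B'_m\hookrightarrow B_m$ (your $M\hookrightarrow L$), invoke Lemma~\ref{scattop1} for the first piece, and use that $\Psi$ preserves coproducts for the second; you phrase the conclusion as a pasting of homotopy cocartesian squares, whereas the paper writes down the comparison map $X_m\to\Psi\D_m$ directly and observes it is a weak equivalence, but the content is the same. Your explicit justification that $\Psi=k^{!}\widetilde{\N}\sing$ preserves coproducts (via connectedness of $\Xi(\Delta^{n})$ and $k_{!}(\Delta^{n})$) is a point the paper leaves implicit.
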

\begin{proof}
We will do the proof for  $i:S\rightarrow B$, the other case is analogue.
We denote by $\Delta^{n}(m)$ (resp. $\partial\Delta^{n}(m)$ ) the set of $m$-simplicies  $\Delta^{n}$ (resp. $\partial\Delta^{n}$).\\
First of all, let remark that 
$$j_{m}:S_{m}=\bigsqcup_{\beta\in\partial\Delta^{n}(m) } D^{\beta}\rightarrow\bigsqcup_{\beta\in\partial\Delta^{n}(m)} \Delta^{n}=B^{'}_{m}$$
 is a trivial cofibration in $\sSet_{\mathbf{K}}$.
In an other hand, colimits in $\sCat_{\Top}$ are computed degree-wise. In degree $m$ we have that 
$$\D_{m}=(\C_{m}\bigsqcup_{\Theta S_{m}}\Theta B^{'}_{m})\bigsqcup \bigsqcup_{\beta\in(\Delta^{n}(m)\setminus\partial\Delta^{n}(m))}\Theta(\Delta^{n})  $$ 
If we consider now the pushout in $\sSet^{2}$ 
$$
  \xymatrix{
\Psi_{\bullet}\Theta_{\bullet} (S)\ar[r]\ar[d]  & \Psi_{\bullet}\C_{\bullet}\ar[d] \\
  \Psi_{\bullet} \Theta_{\bullet} (B)  \ar[r] & X
  }$$
then $X_{m}$ is equal to 
 $$(\Psi\C_{m}\bigsqcup_{\Psi\Theta S_{m} }\Psi\Theta  B^{'}_{m}) \bigsqcup \bigsqcup_{\beta\in(\Delta^{n}(m)\setminus\partial\Delta^{n}(m))}\Psi\Theta(\Delta^{n}).  $$
By the lemma \ref{scattop1}, the map $\Psi\C_{m}\bigsqcup_{\Psi\Theta S_{m}}\Psi\Theta B^{'}_{m}\rightarrow \Psi( \C_{m}\bigsqcup_{\Theta S_{m}}\Theta B^{'}_{m})$
is a weak equivalence in $\sSet_{\mathbf{K}}$. Consequently,  $X_{m}\rightarrow \Psi\D_{m}$, is an equivalence for each $m$. So $X\rightarrow \Psi_{\bullet}\D_{\bullet}$ is a weak equivalence in $\sSet^{2}_{pr}$. It follows that a diagonal weak equivalence in $\sSet^{2}_{d}$
\end{proof}

\begin{lemma}\label{scattop3}
Let $A\rightarrow B$ be an acyclic cofibration in $\sSet^{2}_{d}$, then the induced morphisms in $\sSet$,  $\diag\Psi_{\bullet}\Theta_{\bullet} (A)\rightarrow\diag \Psi_{\bullet}\Theta_{\bullet} (B)$, is an acyclic cofibration in $\sSet_{\mathbf{K}}$.
\end{lemma}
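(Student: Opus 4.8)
The plan is to reduce the statement to the two lemmas already established, \ref{scattop1} and \ref{scattop2}, together with the known Quillen equivalence $(\mathrm{d}_{\ast},\diag)$ between $\sSet_{\mathbf{K}}$ and $\sSet^{2}_{d}$. Recall that every acyclic cofibration in $\sSet^{2}_{d}$ can be built, up to retract, as a transfinite composition of pushouts of the generating acyclic cofibrations $j:A\to B$ of the form $\mathrm{d}_{\ast}\Lambda^{n}_{i}\to\mathrm{d}_{\ast}\Delta^{n}$ described just before \ref{scattop2}. So I first reduce to the case of such a generating map, and then to the case of a single pushout, using that $\diag$ and $\Psi_{\bullet}$ both preserve colimits (they are left adjoints, $\Psi_{\bullet}$ being left adjoint to nothing directly but rather $\Theta_{\bullet}$ is its left adjoint — so here I must instead observe $\diag$ is a left adjoint and reason degreewise for $\Psi_{\bullet}$, exactly as in the proof of \ref{scattop2}).

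The key steps, in order, are as follows. First, cofibrancy: $\diag\Psi_{\bullet}\Theta_{\bullet}(A)\to\diag\Psi_{\bullet}\Theta_{\bullet}(B)$ is a monomorphism of simplicial sets, hence a cofibration in $\sSet_{\mathbf{K}}$. This follows because $A\to B$ admits a section after applying $\Psi_{\bullet}\Theta_{\bullet}$: indeed $\Theta(A)\to\Theta(B)$ has a retraction in $\Cat_{\Top}$ since all objects there are fibrant and $\Theta(A)\to\Theta(B)$ is an acyclic cofibration, and this retraction is natural enough to assemble degreewise into a retraction of $\Theta_{\bullet}(A)\to\Theta_{\bullet}(B)$; applying $\Psi_{\bullet}$ and then $\diag$ preserves the retraction, and a map with a retraction whose source and target are simplicial sets is a monomorphism. (Alternatively, one reuses the monomorphism statement already proven inside the proof of \ref{scattop1}, pushed through the diagonal.) Second, the weak equivalence part: by \ref{scattop2}, the square obtained by applying $\Psi_{\bullet}$ to the defining pushout of $\Theta_{\bullet}(A)\to\Theta_{\bullet}(B)$ along any map to an object $\C_{\bullet}$ is homotopy cocartesian in $\sSet^{2}_{pr}$, hence in $\sSet^{2}_{d}$; taking $\C_{\bullet}=\Theta_{\bullet}(A)$ and the identity, the pushout is just $\Theta_{\bullet}(B)$, and the homotopy-cocartesian square with initial-equivalent edge $\Psi_{\bullet}\Theta_{\bullet}(A)\to\Psi_{\bullet}\Theta_{\bullet}(A)$ forces $\Psi_{\bullet}\Theta_{\bullet}(A)\to\Psi_{\bullet}\Theta_{\bullet}(B)$ to be a diagonal weak equivalence; this is exactly the content of \ref{scattop2} read for $j:A\to B$. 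Third, apply $\diag$: since $\diag$ is the left Quillen equivalence $\sSet^{2}_{d}\to\sSet_{\mathbf{K}}$ and in particular sends diagonal weak equivalences to weak equivalences of simplicial sets, $\diag\Psi_{\bullet}\Theta_{\bullet}(A)\to\diag\Psi_{\bullet}\Theta_{\bullet}(B)$ is a weak equivalence in $\sSet_{\mathbf{K}}$.

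Finally I would handle the general acyclic cofibration $A\to B$ (not just a generator) by the small object argument: write it as a retract of a transfinite composite of pushouts of generators $j_{k}$, observe that $\diag\Psi_{\bullet}\Theta_{\bullet}$ carries this transfinite composite to a transfinite composite in $\sSet_{\mathbf{K}}$ (left adjointness of $\diag$ and $\Theta_{\bullet}$, plus degreewise colimits as in \ref{scattop2}), each stage of which is an acyclic cofibration by the previous paragraph applied with varying $\C_{\bullet}$, and then use that transfinite composites of acyclic cofibrations and retracts of acyclic cofibrations are again acyclic cofibrations in the cofibrantly generated model category $\sSet_{\mathbf{K}}$.

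I expect the main obstacle to be the cofibrancy (monomorphism) claim in the general case: while for a single pushout of a generator it is clean, propagating the monomorphism property through a transfinite composite requires knowing that each pushout of $\Psi_{\bullet}\Theta_{\bullet}j_{k}$ is still a monomorphism, which in turn relies on $\Psi_{\bullet}\Theta_{\bullet}j_{k}$ being a monomorphism in a way stable under cobase change — true in $\sSet$ since monomorphisms are stable under pushout, but one must be careful that $\diag\Psi_{\bullet}\Theta_{\bullet}$ genuinely commutes with these pushouts, which is where the degreewise analysis of \ref{scattop2} (colimits in $\sCat_{\Top}$ computed degreewise, $\Psi$ applied levelwise) does the real work. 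Everything else is a formal assembly of \ref{scattop1}, \ref{scattop2}, and standard facts about cofibrantly generated model categories.
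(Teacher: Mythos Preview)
There is a real gap in your weak-equivalence step. You invoke Lemma~\ref{scattop2} with $\C_{\bullet}=\Theta_{\bullet}(A)$ and the identity map, but the resulting square
\[
\xymatrix{
\Psi_{\bullet}\Theta_{\bullet}(A)\ar[r]^-{\mathrm{id}}\ar[d] & \Psi_{\bullet}\Theta_{\bullet}(A)\ar[d]\\
\Psi_{\bullet}\Theta_{\bullet}(B)\ar[r]^-{\mathrm{id}} & \Psi_{\bullet}\Theta_{\bullet}(B)
}
\]
is trivially homotopy cocartesian regardless of what the vertical map is. In a homotopy pushout square, an equivalence on one edge forces the \emph{parallel} edge to be an equivalence, not an adjacent one; so from the top identity you only recover that the bottom identity is an equivalence, which is vacuous. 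Lemma~\ref{scattop2} by itself carries no information about whether $\Psi_{\bullet}\Theta_{\bullet}(A)\to\Psi_{\bullet}\Theta_{\bullet}(B)$ is a diagonal weak equivalence --- that is precisely the content one has to supply here, and your argument does not supply it.

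The paper's proof is different and direct, and it does not use Lemma~\ref{scattop2} at all. Despite the phrasing of the statement, the proof (and every later use of the lemma) concerns only the generating case $A=\mathrm{d}_{\ast}\Lambda^{n}_{i}$, $B=\mathrm{d}_{\ast}\Delta^{n}$. There one exploits the explicit degree-wise description recalled just before Lemma~\ref{scattop2}: $A_{m}=\bigsqcup_{\beta\in\Lambda^{n}_{i}(m)}C^{\beta}$ and $B_{m}=\bigsqcup_{\beta\in\Delta^{n}(m)}\Delta^{n}$, with every summand weakly contractible. Since $\Theta$ is left Quillen, $\Theta_{\bullet}A$ and $\Theta_{\bullet}B$ are degree-wise weakly equivalent in $\Cat_{\Top}$ to the discrete simplicial objects $\bigsqcup_{\Lambda^{n}_{i}}\ast$ and $\bigsqcup_{\Delta^{n}}\ast$; applying $\Psi_{\bullet}$ (right Quillen, all objects fibrant) preserves these equivalences, and after $\diag$ the map in question is compared, by $2$-out-of-$3$, to the original horn inclusion $\Lambda^{n}_{i}\to\Delta^{n}$. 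Your cofibrancy argument via degreewise retractions is fine and matches the paper's. Your reduction from a general acyclic cofibration to generators is not needed for how the lemma is used, and the inductive step you propose for it (``each stage is an acyclic cofibration by the previous paragraph with varying $\C_{\bullet}$'') is essentially Corollary~\ref{scattop4}, which already presupposes the present lemma for generators; so even there the work comes back to establishing the generator case correctly.
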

\begin{proof}
If $Y\rightarrow \ast$  is an equivalence in $\sSet_{\mathbf{K}}$, then $\Theta(Y)\rightarrow \ast$ is an equivalence in $\Cat_{\Top}$ since $\Theta$ is a left Quillen functor.
We have the following commutative diagram:
$$
\xymatrix{
\Theta_{\bullet} A\ar[r]^{f}\ar[d]  & \bigsqcup_{\Lambda_{i}^{n}}\ast\ar[d] \\
  \Theta_{\bullet} B\ar[r]^{g} & \bigsqcup_{\Delta^{n}}\ast
  }$$
where $f,~g$  are equivalences of topological categories degree by degree. Applying the functor $\Psi_{\bullet}$ we have a degree-wise equivalence of bisimplicial sets $\sSet^{2}_{pr}$, because all objects in  $\Cat_{\Top}$ are fibrant.
Now, applying the diagonal functor, we conclude that  $\diag\Psi_{\bullet}\Theta_{\bullet} (A)\rightarrow\diag \Psi_{\bullet}\Theta_{\bullet} (B)$ is an equivalence.
To see that is in fact a cofibration of simplicial sets, it is enough to see that  $\Theta (C^{\beta})\rightarrow\Theta(\Delta^{n})$ is a trivial cofibration of topological categories, consequently, it admits a section because all objects in $\Cat_{\Top}$ are fibrant.
This implies that $\Psi_{\bullet}\Theta_{\bullet} (A)\rightarrow \Psi_{\bullet}\Theta_{\bullet} (B)$ is a degree-wise monomorphism of bisimplicial sets. Finally, applying the functor  $\diag$ we obtain that 
$$\diag\Psi_{\bullet}\Theta_{\bullet} (A)\rightarrow\diag \Psi_{\bullet}\Theta_{\bullet} (B)$$ 
is a monomorphism in $\sSet$. 
 
\end{proof}

\begin{corollary}\label{scattop4}
With the same notations as in lemma  \ref{scattop2}, the map of bisiplicial sets $ \Psi_{\bullet}\C_{\bullet}\rightarrow X$ is a diagonal weak equivalence. Moreover the map
$\Psi_{\bullet}\C_{\bullet}\rightarrow \Psi_{\bullet}\D_{\bullet}$ is a weak diagonal equivalence.
\end{corollary}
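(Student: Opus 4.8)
The plan is to deduce the corollary formally from Lemmas~\ref{scattop2} and~\ref{scattop3} together with the model--category axiom that acyclic cofibrations are stable under cobase change, finishing with a two--out--of--three argument. Throughout I would work in the acyclic case of Lemma~\ref{scattop2}, i.e.\ with a generating acyclic cofibration $j\colon A\to B$ of $\sSet^{2}_{d}$; the first assertion genuinely needs this, since in the cofibration case $S\to B$ the map $\Psi_{\bullet}\C_{\bullet}\to X$ is merely a monomorphism. Recall that $X$ is, \emph{by construction}, the pushout in $\sSet^{2}$ of the span $\Psi_{\bullet}\Theta_{\bullet}(B)\leftarrow\Psi_{\bullet}\Theta_{\bullet}(A)\to\Psi_{\bullet}\C_{\bullet}$, so that $\Psi_{\bullet}\C_{\bullet}\to X$ is exactly the cobase change of $\Psi_{\bullet}\Theta_{\bullet}(A)\to\Psi_{\bullet}\Theta_{\bullet}(B)$ along $\Psi_{\bullet}\Theta_{\bullet}(A)\to\Psi_{\bullet}\C_{\bullet}$.

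First I would record that $\Psi_{\bullet}\Theta_{\bullet}(A)\to\Psi_{\bullet}\Theta_{\bullet}(B)$ is an acyclic cofibration in $\sSet^{2}_{d}$. By Lemma~\ref{scattop3} this map becomes an acyclic cofibration of $\sSet_{\mathbf{K}}$ after applying $\diag$; since a morphism of bisimplicial sets is a weak equivalence of $\sSet^{2}_{d}$ precisely when $\diag$ carries it to a weak equivalence of $\sSet_{\mathbf{K}}$, it is already a diagonal weak equivalence. Moreover the proof of Lemma~\ref{scattop3} shows that $\Psi_{\bullet}\Theta_{\bullet}(A)\to\Psi_{\bullet}\Theta_{\bullet}(B)$ is a degree--wise monomorphism of bisimplicial sets --- the point being that the maps $\Theta(C^{\beta})\to\Theta(\Delta^{n})$ are trivial cofibrations with fibrant target (all objects of $\Cat_{\Top}$ are fibrant), hence admit retractions, so that $\Psi$, being a functor, carries them to split monomorphisms of simplicial sets. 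A degree--wise monomorphism is a monomorphism, hence a cofibration of the Moerdijk structure. Therefore $\Psi_{\bullet}\Theta_{\bullet}(A)\to\Psi_{\bullet}\Theta_{\bullet}(B)$ is an acyclic cofibration of $\sSet^{2}_{d}$.

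Since acyclic cofibrations are stable under cobase change in any model category, the pushout square defining $X$ shows that $\Psi_{\bullet}\C_{\bullet}\to X$ is an acyclic cofibration of $\sSet^{2}_{d}$, in particular a diagonal weak equivalence; this is the first assertion. For the second, I would invoke Lemma~\ref{scattop2}, which provides that the comparison map $X\to\Psi_{\bullet}\D_{\bullet}$ --- which exists, and need not be an isomorphism, precisely because the right adjoint $\Psi_{\bullet}$ does not preserve the pushout $\D_{\bullet}$ --- is a diagonal weak equivalence. Applying two--out--of--three to the factorization $\Psi_{\bullet}\C_{\bullet}\to X\to\Psi_{\bullet}\D_{\bullet}$ then yields that $\Psi_{\bullet}\C_{\bullet}\to\Psi_{\bullet}\D_{\bullet}$ is a diagonal weak equivalence.

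I do not expect a genuine obstacle here: the argument is purely formal once Lemmas~\ref{scattop2} and~\ref{scattop3} are in hand. The only points requiring a moment's care are the passage from the degree--wise statements in the proof of Lemma~\ref{scattop3} to honest statements about the Moerdijk model structure $\sSet^{2}_{d}$ (using that its cofibrations are the monomorphisms and that every projective weak equivalence is among its weak equivalences), and the bookkeeping that keeps the honest pushout $X$ in $\sSet^{2}$ separate from $\Psi_{\bullet}\D_{\bullet}$, with Lemma~\ref{scattop2} serving as the bridge between them.
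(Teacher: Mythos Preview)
Your argument is correct and follows essentially the same route as the paper: identify $\Psi_{\bullet}\C_{\bullet}\to X$ as the cobase change of $\Psi_{\bullet}\Theta_{\bullet}(A)\to\Psi_{\bullet}\Theta_{\bullet}(B)$, use Lemma~\ref{scattop3} to see the latter is an acyclic cofibration, conclude the former is a diagonal weak equivalence, and finish with Lemma~\ref{scattop2} and two--out--of--three. The only cosmetic difference is that the paper first applies $\diag$ (using that $\diag$ commutes with colimits) and then runs the cobase--change argument in $\sSet_{\mathbf{K}}$, whereas you run it in $\sSet^{2}_{d}$ and invoke that monomorphisms are cofibrations there; the two are equivalent.
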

\begin{proof}
Since the functor  $\diag$ commutes with colimits, lemmas \ref{scattop2} and \ref{scattop3}  imply that  $ \diag\Psi_{\bullet}\C_{\bullet}\rightarrow \diag X$ is a weak equivalence. By the lemma \ref{scattop2} we have that 
$\diag X\rightarrow \diag \Psi_{\bullet}\D_{\bullet}$ is a weak equivalence. So the property \textit{2 out of 3} the map  $\Psi_{\bullet}\C_{\bullet}\rightarrow \Psi_{\bullet}\D_{\bullet}$ is a diagonal equivalence.
\end{proof}
\begin{lemma}\label{scattop5}
The functors  $k^{!}$, $\widetilde{\N}$ and $\sing$ commute with directed colimits.
\end{lemma}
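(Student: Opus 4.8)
The plan is to treat the three functors uniformly as being \emph{corepresented}, degreewise, by a (co)simplicial object, and to reduce the statement to the smallness of the corepresenting objects. Recall that $\widetilde{\N}(\C)_{n}=\homs_{\Cat_{\sSet}}(\Xi\Delta^{n},\C)$, that $k^{!}(X)_{n}=\homs_{\sSet}(k_{!}\Delta^{n},X)$, and that $\sing$ is computed hom-wise by the topological singular functor, $\sing(Y)_{n}=\homs_{\Top}(\Delta^{n}_{\mathrm{top}},Y)$. Since directed colimits in $\sSet$, in $\Cat_{\sSet}$, and (on underlying sets) in $\Top$ are all computed degreewise, a morphism out of the corepresenting object into a directed colimit is a compatible family of morphisms into degreewise directed colimits. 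Thus in each case the functor commutes with directed colimits provided the corepresenting object is small (finitely presentable) with respect to the colimits at hand; this common reduction is what I would set up first.

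The two clean cases are $\widetilde{\N}$ and $\sing$. For $\widetilde{\N}$, the rigidification $\Xi\Delta^{n}$ has a \emph{finite} set of objects $\{0,\dots,n\}$ and each of its mapping spaces is a \emph{finite} simplicial set (a cube $(\Delta^{1})^{k}$); since $\Cat_{\sSet}$ is locally finitely presentable and directed colimits there are created on object sets and on mapping spaces, $\Xi\Delta^{n}$ is finitely presentable and $\homs_{\Cat_{\sSet}}(\Xi\Delta^{n},-)$ commutes with directed colimits. For $\sing$, the topological simplex $\Delta^{n}_{\mathrm{top}}$ is compact; using that the forgetful functor $\Top\rightarrow\mathbf{Set}$ preserves colimits and that in compactly generated Hausdorff spaces a map out of a compact space into a directed colimit along closed inclusions factors through a finite stage, one obtains that $\homs_{\Top}(\Delta^{n}_{\mathrm{top}},-)$, and hence $\sing$, commutes with the relevant directed colimits. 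I would then assemble these hom-wise statements into the statement for $\sing$ on $\Cat_{\Top}$ by computing the colimit hom-wise.

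The main obstacle is $k^{!}$. Here the corepresenting object $k_{!}\Delta^{n}$ is the nerve of the contractible groupoid on the $n+1$ vertices of $\Delta^{n}$: it is finite in each simplicial degree, but it carries nondegenerate simplices in every dimension, so it is \emph{not} a finite simplicial set and the naive compactness argument used for $\widetilde{\N}$ does not apply verbatim. To reach the stated conclusion I would argue degreewise: since a directed colimit $X=\colim_{i}X_{i}$ in $\sSet$ is formed degreewise in $\mathbf{Set}$, a map $k_{!}\Delta^{n}\rightarrow X$ is a compatible family $(k_{!}\Delta^{n})_{m}\rightarrow\colim_{i}(X_{i})_{m}$ of maps out of finite sets, and I would show this family factors through a single index by exploiting directedness together with the bounded combinatorial structure of $k_{!}\Delta^{n}$ and, in the application, the fact that the transition maps in the colimits produced by the small object argument are levelwise monomorphisms between quasi-categories, so that the images stabilise. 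Verifying this factorization across all simplicial degrees simultaneously is exactly the delicate point, and it is where I expect the real work to lie.

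Granting the three cases, $\Psi=k^{!}\widetilde{\N}\sing$ is a composite of functors commuting with directed colimits, hence so is $\Psi$, and therefore so is $\diag\Psi_{\bullet}$; this is precisely the input required to verify condition (2) of Lemma \ref{lem1} in the proof of Theorem \ref{scattop}.
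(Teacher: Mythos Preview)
Your overall strategy---reducing each functor to degreewise corepresentability and invoking smallness of the corepresenting object---is exactly the paper's approach; the paper's proof is a terser version of your first two paragraphs. For $\widetilde{\N}$ and $\sing$ your arguments are correct and agree with the paper (which just asserts that $\Xi(\Delta^{n})$ is small in $\Cat_{\sSet}$ and defers $\sing$ to \cite{Amrani1}).

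You have, however, correctly put your finger on a real problem with $k^{!}$ that the paper passes over. The paper simply asserts that $\homs_{\sSet}(k_{!}\Delta^{n},-)$ commutes with directed colimits, i.e.\ that $k_{!}\Delta^{n}$ is finitely presentable; but as you note, $k_{!}\Delta^{n}$ is the nerve of the contractible groupoid on $n{+}1$ objects and is not a finite simplicial set for $n\geq 1$, and in a presheaf category the finitely presentable objects are precisely the finite colimits of representables. Concretely, the $\omega$-chain of skeleta $\mathrm{sk}_{m}(k_{!}\Delta^{1})$ has colimit $k_{!}\Delta^{1}$, yet the identity on $k_{!}\Delta^{1}$ does not factor through any finite skeleton, so $\homs_{\sSet}(k_{!}\Delta^{1},-)$ fails to commute with this directed colimit. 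Your proposed workaround (monomorphic transition maps, stabilising images) cannot repair the general statement either, since the skeletal inclusions are already monomorphisms.

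What is true is that each $k_{!}\Delta^{n}$ is countable, hence $\aleph_{1}$-presentable, so $k^{!}$ commutes with $\aleph_{1}$-filtered colimits; and this weaker fact is all that is actually needed for the transfer argument of Theorem~\ref{scattop}, since the small object argument underlying Lemma~\ref{lem1} can be run at any regular cardinal $\kappa$ for which the relevant smallness and preservation hold. So the lemma should be read (and proved) with ``$\kappa$-directed'' in place of ``directed'' for a suitable $\kappa$; neither your proposal nor the paper's one-line argument establishes the literal claim, because the literal claim is false.
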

\begin{proof}
 The fact  $k^{!}$ commutes with directed colimits is a direct consequence from the adjunction $(k_{!}, k^{!})$ and that the functor $\homs_{\sSet} (k_{!}\Delta^{n},-)$ commutes with directed colimits.
 By the same way $\widetilde{\N}$ commutes with directed colimits since $\Xi (\Delta^{n})$ are small 
 objet in $\Cat_{\sSet}$. 
The functor $\sing:\Cat_{\Top}\rightarrow\Cat_{\sSet}$ commutes with directed colimits by \cite{Amrani1}. %Les foncteurs $k^{!}$ et $\widetilde{\N}$ commutent avec les colimites directed s par \ref{scattop5}. Puisque les colimites dans $\sCat_{\Top}$ se calculent niveau par niveau, on conclut que $\diag\Psi_{\bullet}$ commute avec les colimites directed s.
\end{proof}
Finally, we are ready to prove the main theorem of this section   
\begin{proof} [\textbf{Proof of the main theorem \ref{scattop}.}]
First of all, $\sCat_{\Top}$ is complete and cocomplete because $\Cat_{\Top}$ is so. Following the fundamental lemma \ref{lem1}, the points (1) and (3) are obvious. the point (2) is proven in  \ref{scattop5} and finally, the point (4) is given by \ref{scattop4}.
\end{proof}
\begin{remark}\label{remarque}
We should point out that we are unable to prove a same result for $\sCat_{\sSet}$ for the simple reason that objects in $\Cat_{\sSet}$ are not all fibrant. As we have seen before, it plays a crucial role to
prove the main theorem \ref{scattop}. However, we believe that such model structure exists and is Quillen equivalent to the diagonal model structure on $\sCat_{\Top}$. The main idea is to prove that given any simplicial category $\C$, the counite map $k^{!} \widetilde{\N}\C\rightarrow k^{!} \widetilde{\N}\sing|\C|$ is a weak equivalence in $\sSet_{\mathbf{K}}$, this statement is true if  $\C$ was fibrant.\\

\end{remark}

\section{Left Properness}\label{section2}
In this section we will show that  $\sCat_{\Top}$ is a left proper model category. First of all, we will give some properties of cofibrations.

\begin{lemma}\label{topcof1}
Let $i:A\rightarrow B$ be a generating cofibration in  $\sSet^{2}_{d}$, then $\Theta_{\bullet}i:~\Theta_{\bullet} A\rightarrow \Theta_{\bullet} B$ is an inclusion of topological categories.
Moreover, $\Psi_{\bullet}\Theta_{\bullet}i:~\Psi_{\bullet}\Theta_{\bullet} A\rightarrow \Psi_{\bullet} \Theta_{\bullet} B$ is a monomorphism in $\sSet^{2}$.
\end{lemma}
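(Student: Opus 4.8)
The plan is to analyze the functor $\Theta_\bullet$ explicitly on the generating cofibrations listed before Lemma \ref{scattop2}, namely $i\colon S = \mathrm{d}_\ast\partial\Delta^n \to B = \mathrm{d}_\ast\Delta^n$ (and the acyclic case is literally the same argument). Recall that $\mathrm{d}_\ast$ takes $\Delta^n$ to $\bigsqcup_{\beta\in\Delta^n}\Delta^n$ and that $\Theta_\bullet$ is applied levelwise, so in simplicial degree $m$ the map $\Theta_\bullet i$ is $\bigsqcup_{\beta\in\partial\Delta^n(m)}\Theta(D^\beta) \to \bigsqcup_{\beta\in\Delta^n(m)}\Theta(\Delta^n)$, where the coproduct on the right is indexed by a strictly larger finite set of nondegenerate-plus-degenerate simplices. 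First I would observe that a coproduct of inclusions of topological categories, composed with a coproduct inclusion into a larger coproduct, is again an inclusion of topological categories (injective on objects and on each morphism space), so it suffices to check that each component $\Theta(D^\beta)\to\Theta(\Delta^n)$, and more basically each $\Theta(\partial\Delta^n)\to\Theta(\Delta^n)$, is an inclusion. Since $\Theta = |-|\circ\Xi\circ k_!$ and each of $k_!$, $\Xi$, $|-|$ is a left Quillen functor that is visibly monic on the relevant generating maps — $k_!$ preserves monomorphisms of simplicial sets, $\Xi$ preserves inclusions of simplicial sets into simplicial categories, and geometric realization $|-|$ of a simplicial category is injective on objects and on mapping spaces — the composite $\Theta$ sends the monomorphism $\partial\Delta^n\hookrightarrow\Delta^n$ to an inclusion of topological categories. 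This gives the first assertion.

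For the second assertion I would argue that $\Psi_\bullet\Theta_\bullet i$ is a monomorphism of bisimplicial sets by reducing to a levelwise statement and reusing the section trick already exploited in Lemmas \ref{scattop1}, \ref{scattop2}, and \ref{scattop3}. Concretely, in each simplicial degree $m$ the map $\Theta_\bullet i$ in degree $m$ is, up to the harmless extra coproduct summands $\bigsqcup_{\beta\in\Delta^n(m)\setminus\partial\Delta^n(m)}\Theta(\Delta^n)$, a coproduct of maps $\Theta(D^\beta)\to\Theta(\Delta^n)$ which are trivial cofibrations of topological categories (each $D^\beta$ is weakly contractible, and $\Theta$ is left Quillen). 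Because every object of $\Cat_{\Top}$ is fibrant, each such trivial cofibration admits a retraction; a coproduct of maps admitting retractions admits a retraction, and inclusion into a larger coproduct obviously admits a retraction. Hence $\Theta_\bullet i$ admits a levelwise retraction in $\Cat_{\Top}$, so applying the functor $\Psi = k^!\widetilde{\N}\,\sing$ levelwise produces a levelwise split monomorphism of simplicial sets, i.e. a levelwise monomorphism. A levelwise monomorphism of bisimplicial sets is a monomorphism in $\sSet^2$, which is exactly the claim.

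The one point that needs genuine care — and which I expect to be the main obstacle — is establishing that $\Theta$ (equivalently each of $k_!$, $\Xi$, $|-|$) really is monic at the level of mapping spaces, not merely a cofibration in the Bergner/$\Cat_{\Top}$ model structures; being a left Quillen functor alone does not force injectivity on hom-objects after left Kan extension. For $k_!$ this is the statement that $k_!$ preserves monomorphisms (true since it is a left adjoint whose value on representables are simplicial sets and colimits of monos along monos stay monic in $\sSet$, or one can cite \cite{Joyalcrm}); for $\Xi$ one uses that the rigidification of an inclusion of simplicial sets is an inclusion on objects and a levelwise mono on mapping complexes; and for $|-|$ one uses that realization is a finite-limit-preserving-enough left adjoint so that a levelwise injection of simplicial mapping spaces realizes to an injection of topological mapping spaces (compact generation of $\Top$ is what makes this clean). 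In fact, for the purposes of the later sections only the monomorphism statement after $\Psi_\bullet$ is truly needed, and that one is obtained unconditionally from the retraction argument above; so even if one is squeamish about the mapping-space injectivity of $\Theta$, the essential content of the lemma survives via fibrancy of objects in $\Cat_{\Top}$.
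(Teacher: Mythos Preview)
Your argument for the second assertion via the section/retraction trick is exactly what the paper does, and it is correct. For the first assertion, however, you take an unnecessarily hard road: you try to verify directly that $k_!$, $\Xi$, and $|-|$ each preserve monomorphisms at the hom-space level, and you rightly flag this as the delicate point. The paper sidesteps this entirely by observing that the very same retraction argument already proves the inclusion claim. Since the corestriction $A_m\to B'_m$ is a trivial cofibration in $\sSet_{\mathbf{K}}$, its image $\Theta A_m\to\Theta B'_m$ is a trivial cofibration in $\Cat_{\Top}$ between fibrant objects and hence admits a section; a split monomorphism in $\Cat_{\Top}$ is automatically injective on objects and on every mapping space, i.e.\ an inclusion of topological categories. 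Composing with the coproduct inclusion into the extra summands gives $\Theta_\bullet i_m$ as an inclusion, and applying $\Psi$ to a split mono yields a split mono of simplicial sets.

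So the section trick you deploy in your second paragraph already suffices for \emph{both} assertions, and one never needs to worry about whether $\Theta$ preserves monomorphisms abstractly. Your final paragraph almost arrives at this, but stops short of noticing that ``split mono in $\Cat_{\Top}$'' is already ``inclusion of topological categories''; once you make that observation, your whole third paragraph becomes unnecessary.
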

\begin{proof}
We have seen in \ref{scattop2} that the map $i_{m}:~A_{m}\rightarrow B_{m}$ is written as
$$i_{m}: A_{m}\rightarrow B_{m}^{'}\bigsqcup\bigsqcup_{\beta\in \Delta^{n}(m)\setminus\partial\Delta^{n}(m)} \Delta^{n}.$$
The corestriction map $i^{'}_{m}: A_{m}\rightarrow B_{m}^{'}$ is a trivial cofibration in $\sSet_{\mathbf{K}}$. So, $\Theta i^{'}_{m}: \Theta A_{m}\rightarrow \Theta B_{m}^{'}$ is a trivial cofibration in $\Cat_{\Top}$, consequently, we have a section for $i^{'}_{m}$ because all objects in $\Cat_{\Top}$ are fibrant.
We conclude that  $i_{m}$ is an inclusion of topological categories and $\Psi i_{m}$ is a monomorphism in $\sSet$.
\end{proof}
%Les résultats qui suivent concerneront tous les pushouts dans $\sCat_{\Top}$ le long de $\Theta_{\bullet}(A)\rightarrow\Theta_{\bullet}(B)$, où $A\rightarrow B$ est une cofibration génératrice dans $\sSet^{2}$ (munie de la structure modèle de Moerdijk).\\

\begin{lemma}\label{topcof2}
Let $\A_{\bullet}\rightarrow\B_{\bullet}$ be a cellular cofibration obtained by a pushout in $\sCat_{\Top}$ of a generating cofibration  $\Theta_{\bullet}i:\Theta_{\bullet}Z\rightarrow \Theta_{\bullet}W$. Then
$\A_{\bullet}\rightarrow\B_{\bullet}$ is a degree-wise inclusion of topological categories. Moreover, $\Psi_{\bullet}\A_{\bullet}\rightarrow\Psi_{\bullet}\B_{\bullet}$ is a monomorphism in $\sSet^{2}$.
\end{lemma}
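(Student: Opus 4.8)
The plan is to mimic the structure of Lemma~\ref{topcof1}, but now in the relative (pushout) situation, working degree by degree and using that all objects in $\Cat_{\Top}$ are fibrant. First I would unwind the pushout. Since colimits in $\sCat_{\Top}$ are computed degree-wise, in simplicial degree $m$ the square giving $\A_\bullet\to\B_\bullet$ becomes a pushout in $\Cat_{\Top}$
$$
\xymatrix{
\Theta_\bullet(Z)_m \ar[r]\ar[d] & \A_m \ar[d] \\
\Theta_\bullet(W)_m \ar[r] & \B_m.
}
$$
By the analysis in Lemma~\ref{scattop2} and Lemma~\ref{topcof1}, the map $\Theta_\bullet(Z)_m\to\Theta_\bullet(W)_m$ splits as $\Theta Z_m \to \Theta W_m' \sqcup \bigsqcup_{\beta}\Theta(\Delta^n)$ where $Z_m\to W_m'$ is a (trivial) cofibration of simplicial sets, so $\Theta Z_m\to\Theta W_m'$ is a cofibration of topological categories which, being a cofibration into a fibrant object that is trivial, admits a retraction. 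Hence $\Theta_\bullet(Z)_m \to \Theta_\bullet(W)_m$ is an inclusion of topological categories admitting a retraction.

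Next I would push out that inclusion. The key point is that a pushout of an inclusion of topological categories that admits a retraction is again such an inclusion: if $u\colon P\to Q$ is a monomorphism of topological categories with retraction $r\colon Q\to P$, $ru=\mathrm{id}_P$, and $f\colon P\to R$ is any map, then in the pushout $R\to R\sqcup_P Q$ one checks the map is injective on objects (objects of the pushout are $\Ob R \sqcup (\Ob Q\setminus\Ob P)$ since $u$ is injective on objects) and a levelwise closed inclusion on each mapping space, because the retraction $r$ together with $f$ induces a retraction $R\sqcup_P Q\to R$ of the pushed-out map. (This is the categorical analogue of the fact that a pushout of a split monomorphism is a split monomorphism, applied simultaneously on objects and on each hom-space in $\Top$.) Applying this with $u=\Theta_\bullet(Z)_m\to\Theta_\bullet(W)_m$ and $f$ the attaching map $\Theta_\bullet(Z)_m\to\A_m$ gives that $\A_m\to\B_m$ is an inclusion of topological categories, and it too admits a retraction $\B_m\to\A_m$. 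Since this holds for every $m$, $\A_\bullet\to\B_\bullet$ is a degree-wise inclusion.

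For the second assertion, I would apply $\Psi=k^{!}\widetilde\N\,\sing$ degree-wise. Because $\A_m\to\B_m$ admits a retraction in $\Cat_{\Top}$, its image $\Psi(\A_m)\to\Psi(\B_m)$ admits a retraction in $\sSet$, hence is a (split) monomorphism of simplicial sets; this is exactly the argument already used at the end of Lemma~\ref{topcof1}. Running over all $m$, $\Psi_\bullet\A_\bullet\to\Psi_\bullet\B_\bullet$ is a degree-wise monomorphism, i.e.\ a monomorphism in $\sSet^2$, as claimed.

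The main obstacle is the purely categorical lemma in the second paragraph: that a pushout along a split inclusion of topological categories is again a split inclusion. One has to be a little careful because pushouts in $\Cat_{\Top}$ are not computed simply on objects and morphisms (one must take the free category on a graph and quotient by relations when new composites appear), so the cleanest route is the abstract retraction argument — functoriality of pushouts produces the retraction $\B_m\to\A_m$ from $(\mathrm{id}_{\A_m}, r)$, and a map of topological categories admitting a retraction is automatically injective on objects and a closed embedding on each mapping space. Once that is in hand, everything else is a degree-wise bookkeeping of facts already established in Lemmas~\ref{scattop1}, \ref{scattop2} and \ref{topcof1}.
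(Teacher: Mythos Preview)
Your argument is correct and follows essentially the same strategy as the paper's: use the decomposition of $W_m$ into $W'_m$ and extra disjoint $\Delta^n$'s, exploit that $\Theta Z_m\to\Theta W'_m$ is a trivial cofibration between fibrant objects to produce a retraction, and conclude that everything in sight is a (split) inclusion, which $\Psi$ then preserves.

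The one organizational difference is where the retraction is manufactured. The paper produces it \emph{after} the pushout: it observes directly that the corestriction $\A_m \to \A_m\sqcup_{\Theta Z_m}\Theta W'_m$ is a trivial cofibration between fibrant objects and hence admits a section, and then notes that $\B_m$ is this pushout with disjoint copies of $\Theta(\Delta^n)$ adjoined. You instead try to retract $\Theta Z_m\to\Theta W_m$ \emph{before} pushing out and then invoke ``pushout of a split mono is a split mono''. That is fine, but your sentence ``Hence $\Theta_\bullet(Z)_m \to \Theta_\bullet(W)_m$ \dots\ admits a retraction'' is not fully justified: to extend the retraction $\Theta W'_m\to\Theta Z_m$ over the extra summands $\Theta(\Delta^n)$ you need $\Theta Z_m$ to be nonempty, which fails for the generating cofibration with $n=0$ (where $Z_m=\emptyset$). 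That boundary case is of course trivial ($\A_m\to\A_m\sqcup\ast$), but it should be mentioned; the paper's ordering avoids the issue altogether.
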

\begin{proof}
First of all,
$$\B_{m}=(\A_{m}\bigsqcup_{\Theta Z_{m}}\Theta W^{'}_{m})\bigsqcup_{\beta\in(\Delta^{n}(m)\setminus\partial\Delta^{n}(m))}\Theta(\Delta^{n}),$$
where the corestriction 
$$\Theta_{\bullet}i^{'}_{m}:\A_{m}\rightarrow \A_{m}\bigsqcup_{\Theta Z_{m}}\Theta W^{'}_{m}$$ 
is a trivial cofibration between fibrant objects in $\Cat_{\Top}$. This imply that $\Theta_{\bullet}i^{'}_{m} $ admits a section; it follows that $\Theta_{\bullet}i_{m}$ is a degree-wise inclusion of topological categories  and 
 $$\Psi_{\bullet}\A_{\bullet}\rightarrow\Psi_{\bullet}\B_{\bullet}$$
is a monomorphism in $\sSet^{2}$.
\end{proof}

\begin{corollary}\label{topcof3}
Let $i:\A_{\bullet}\rightarrow \B_{\bullet}$ be a cofibration in $\sCat_{\Top}$, then $i_{m}$ is an inclusion of topological categories and $\Psi_{\bullet}i$ is a degree-wise monomorphism in $\sSet^{2}$.
\end{corollary}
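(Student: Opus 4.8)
The plan is to reduce the general case to the two building blocks already established, namely Lemma~\ref{topcof1} (which handles a single generating cofibration $\Theta_\bullet i$) and Lemma~\ref{topcof2} (which handles a pushout of a generating cofibration). Since the model structure on $\sCat_{\Top}$ is cofibrantly generated by Theorem~\ref{scattop}, any cofibration $i:\A_\bullet\to\B_\bullet$ is a retract of a relative $\Theta_\bullet\mathrm{d}_\ast\mathrm{I}$-cell complex. So it suffices to prove the statement for transfinite compositions of pushouts of generating cofibrations, and then observe that both conclusions (``$i_m$ is an inclusion of topological categories'' and ``$\Psi_\bullet i$ is a degree-wise monomorphism in $\sSet^2$'') are stable under retracts.

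First I would fix a transfinite cellular presentation $\A_\bullet=\X_0\to\X_1\to\cdots\to\X_\lambda=\B_\bullet$ where each $\X_\alpha\to\X_{\alpha+1}$ is a pushout of a generating cofibration and $\X_\alpha=\colim_{\beta<\alpha}\X_\beta$ at limit ordinals. Working in a fixed simplicial degree $m$, colimits in $\sCat_{\Top}$ are computed degree-wise, so $(\X_\alpha)_m$ is a transfinite composition in $\Cat_{\Top}$. By Lemma~\ref{topcof2}, each stage $(\X_\alpha)_m\to(\X_{\alpha+1})_m$ is an inclusion of topological categories; since inclusions of topological categories are closed under transfinite composition (the underlying maps on object sets and on each hom-space are injective, and injectivity is preserved by directed colimits of sets/spaces), we get that $(\A_\bullet)_m\to(\B_\bullet)_m$ is an inclusion. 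This gives the first assertion.

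For the second assertion I would apply the functor $\Psi_\bullet$ and use Lemma~\ref{scattop5}: the constituents of $\Psi$ — namely $\sing$, $\widetilde{\N}$ and $k^!$ — all commute with directed colimits, hence so does $\Psi_\bullet$, hence so does $\diag\Psi_\bullet$ (the diagonal is a left adjoint). Therefore $\Psi_\bullet$ carries the transfinite composition above to a transfinite composition in $\sSet^2$, and by Lemma~\ref{topcof2} each stage $\Psi_\bullet\X_\alpha\to\Psi_\bullet\X_{\alpha+1}$ is a degree-wise monomorphism of bisimplicial sets. Monomorphisms of (bi)simplicial sets are closed under transfinite composition because filtered colimits of sets are exact, so $\Psi_\bullet\A_\bullet\to\Psi_\bullet\B_\bullet$ is a degree-wise monomorphism in $\sSet^2$. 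Finally, retracts of the maps just obtained still satisfy both conclusions (a retract of a monomorphism is a monomorphism, and a retract of an inclusion of topological categories is again such), which completes the argument for an arbitrary cofibration.

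The one point that needs a little care — and which I regard as the main obstacle — is making sure Lemma~\ref{topcof2} really applies verbatim at each successor stage: that lemma is stated for a pushout of a \emph{generating} cofibration $\Theta_\bullet i:\Theta_\bullet Z\to\Theta_\bullet W$ with $Z\to W$ of the specific form $\mathrm{d}_\ast\partial\Delta^n\to\mathrm{d}_\ast\Delta^n$, and its proof uses the explicit degreewise description $B'_m$ together with the fact that all objects of $\Cat_{\Top}$ are fibrant to produce a section of the corestriction. Since the generating cofibrations of Theorem~\ref{scattop} are exactly of this form, there is nothing new to prove here, but I would spell out that the attaching map $\Theta_\bullet\mathrm{d}_\ast\partial\Delta^n\to\X_\alpha$ is allowed to be arbitrary in the statement of Lemma~\ref{topcof2} (it is), so the lemma does cover every successor step of a cell complex. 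With that observed, the transfinite-composition and retract closure properties are routine, and the corollary follows.
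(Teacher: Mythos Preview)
Your argument is correct and follows the same overall strategy as the paper: reduce to cellular cofibrations via Lemma~\ref{topcof2} and then close under retracts. The one genuine difference is in how you handle the second assertion. You run a parallel transfinite argument for $\Psi_\bullet i$, invoking Lemma~\ref{scattop5} to pass $\Psi_\bullet$ through the directed colimit and then using that monomorphisms of bisimplicial sets are closed under transfinite composition. The paper instead deduces the second assertion directly from the first: once $i_m$ is known to be an inclusion of topological categories, one simply observes that $\Psi = k^{!}\widetilde{\N}\sing$ is a composite of right adjoints and hence preserves inclusions, so $\Psi i_m$ is automatically a monomorphism. Your route is perfectly valid but slightly more laborious; the paper's shortcut avoids the second transfinite argument entirely and does not need the commutation of $\Psi$ with directed colimits at this point.
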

\begin{proof}
For the case of cellular cofibrations, it is a direct consequence of \ref{topcof2}. We know that monomorphisms are colesed under retracts. We conclude that cofibrations in $\sCat_{\Top}$ are degree-wise inclusions. 
 On an other hand, $\Psi =k^{!}\widetilde{\N}\sing$ preserves inclusions, it follows that  $\Psi_{\bullet}i$ is a monomorphism in $\sSet^{2}$.

\end{proof}

\begin{lemma}\label{propgauche2}
Let $\A_{\bullet}\rightarrow\B_{\bullet}$ be a cofibration  obtained by pushout from a generating cofibration  $\Theta_{\bullet}(A)\rightarrow\Theta_{\bullet}(B)$ in $\sCat_{\Top}$. Then
the functor  $\Psi_{\bullet}$ sends the following pushout to 
$$
  \xymatrix{
\A_{\bullet}\ar[r]\ar[d]  & \C_{\bullet}\ar[d] \\
 \B_{\bullet}  \ar[r] & \D_{\bullet}
  }
  $$
  to a homotopy cocartesian squre in $\sSet^{2}_{pr}$.
  More generally, let $\A_{\bullet}\rightarrow\B_{\bullet}$a cellular cofibration in $\sCat_{\Top}$, the we have the same conclusion.
  
  \end{lemma}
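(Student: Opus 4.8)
The plan is to reduce everything to the local statement already proved in Lemma~\ref{scattop2} together with the degree-wise analysis of pushouts from Lemma~\ref{topcof2}. Concretely, suppose $\A_\bullet\to\B_\bullet$ is the pushout of a generating cofibration $\Theta_\bullet(A)\to\Theta_\bullet(B)$ along a map $\A_\bullet\to\C_\bullet$, with pushout $\D_\bullet$. Since colimits in $\sCat_{\Top}$ are computed degree-wise, in degree $m$ the square is a pushout in $\Cat_{\Top}$ of the form described in Lemma~\ref{topcof2}, namely $\B_m=(\A_m\sqcup_{\Theta Z_m}\Theta W'_m)\sqcup\bigsqcup_{\beta}\Theta(\Delta^n)$ where $Z_m\to W'_m$ is a trivial cofibration in $\sSet_{\mathbf K}$.

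First I would apply $\Psi_\bullet$ and form the pushout $X$ of $\Psi_\bullet\Theta_\bullet(A)\to\Psi_\bullet\C_\bullet$ along $\Psi_\bullet\Theta_\bullet(A)\to\Psi_\bullet\Theta_\bullet(B)$ in $\sSet^2$; this is computed degree-wise, so $X_m$ is an explicit pushout in $\sSet_{\mathbf K}$ involving $\Psi\C_m$, $\Psi\Theta Z_m$, $\Psi\Theta W'_m$ and the contractible extra pieces $\Psi\Theta(\Delta^n)$. By Lemma~\ref{scattop1} (applied with $\C$ replaced by $\C_m$ and the cofibration $Z_m\to W'_m$), the comparison map $X_m\to\Psi\D_m$ is a weak equivalence in $\sSet_{\mathbf K}$ for every $m$, hence $X\to\Psi_\bullet\D_\bullet$ is a degree-wise (projective) weak equivalence in $\sSet^2_{pr}$. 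By construction $\Psi_\bullet\A_\bullet\to X$ is a pushout of $\Psi_\bullet\Theta_\bullet(A)\to\Psi_\bullet\Theta_\bullet(B)$, and by Lemma~\ref{topcof1} the latter is a monomorphism, so this pushout is a homotopy pushout in $\sSet^2_{pr}$; combining this with the weak equivalence $X\to\Psi_\bullet\D_\bullet$ shows the original square becomes homotopy cocartesian in $\sSet^2_{pr}$.

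For the general case of an arbitrary cellular cofibration $\A_\bullet\to\B_\bullet$, I would pass to a transfinite composition: write $\B_\bullet$ as a (possibly transfinite) colimit of a tower $\A_\bullet=\B_\bullet^0\to\B_\bullet^1\to\cdots$ where each stage is a pushout of a generating cofibration. At each successor stage the square obtained by pushing out along $\A_\bullet\to\C_\bullet$ (equivalently along the colimit map at that stage) becomes homotopy cocartesian by the case just handled, and the maps $\Psi_\bullet\B_\bullet^i\to\Psi_\bullet\B_\bullet^{i+1}$ are monomorphisms by Lemma~\ref{topcof2}; at limit stages I use that $\Psi_\bullet$ commutes with directed colimits (Lemma~\ref{scattop5}, since $k^!$, $\widetilde{\N}$, $\sing$ do) and that transfinite compositions of monomorphisms along projective-cofibrant-enough diagrams remain homotopy colimits. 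Gluing the homotopy pushout squares along the tower and using that homotopy colimits commute with homotopy pushouts gives the conclusion for $\B_\bullet$.

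The main obstacle I expect is the limit-stage bookkeeping: one must be sure that $\Psi_\bullet$ applied to a transfinite composition of degree-wise inclusions in $\sCat_{\Top}$ yields a homotopy colimit in $\sSet^2_{pr}$, i.e.\ that the resulting filtration of $\Psi_\bullet\B_\bullet$ is by monomorphisms in each bidegree and that such filtered colimits of simplicial sets are automatically homotopy colimits (which is standard for $\sSet_{\mathbf K}$). The genuinely delicate point, as already flagged in the proof of Lemma~\ref{scattop1}, is that everything hinges on all objects of $\Cat_{\Top}$ being fibrant, so that the trivial cofibrations $\Theta Z_m\to\Theta W'_m$ admit sections and $\Psi$ sends them to trivial cofibrations rather than merely weak equivalences; without this the monomorphism claims, and hence the homotopy-pushout claims, would fail. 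Beyond that, the argument is a routine degree-wise reduction plus a colimit passage.
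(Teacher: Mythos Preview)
Your overall strategy---degree-wise reduction, appeal to Lemma~\ref{scattop1}, and then a transfinite composition argument for the cellular case using that $\Psi_\bullet$ commutes with directed colimits and sends the stages to monomorphisms---is exactly the paper's approach, and the cellular part is handled the same way.

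There is, however, a slip in the base case. The square you must show is homotopy cocartesian has corners $\Psi_\bullet\A_\bullet,\Psi_\bullet\B_\bullet,\Psi_\bullet\C_\bullet,\Psi_\bullet\D_\bullet$, so the comparison object should be $X=\Psi_\bullet\B_\bullet\sqcup_{\Psi_\bullet\A_\bullet}\Psi_\bullet\C_\bullet$. You instead form $X$ as $\Psi_\bullet\Theta_\bullet(B)\sqcup_{\Psi_\bullet\Theta_\bullet(A)}\Psi_\bullet\C_\bullet$; with that choice your argument (via Lemma~\ref{scattop1} and Lemma~\ref{topcof1}) only shows that the \emph{outer composite} square with corners $\Psi_\bullet\Theta_\bullet(A),\Psi_\bullet\Theta_\bullet(B),\Psi_\bullet\C_\bullet,\Psi_\bullet\D_\bullet$ is homotopy cocartesian. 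The sentence ``by construction $\Psi_\bullet\A_\bullet\to X$ is a pushout of $\Psi_\bullet\Theta_\bullet(A)\to\Psi_\bullet\Theta_\bullet(B)$'' is not correct for that $X$: the map produced by the pushout is $\Psi_\bullet\C_\bullet\to X$, not $\Psi_\bullet\A_\bullet\to X$. To recover the statement of the lemma you would still need the pasting law for homotopy pushouts together with Lemma~\ref{scattop2} (which says the left square $\Psi_\bullet\Theta_\bullet(A),\Psi_\bullet\Theta_\bullet(B),\Psi_\bullet\A_\bullet,\Psi_\bullet\B_\bullet$ is itself homotopy cocartesian).

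The paper avoids this detour: it forms the correct comparison $\Psi\C_m\sqcup_{\Psi\A_m}\Psi\B_m$, uses the explicit decomposition $\B_m=(\A_m\sqcup_{\Theta A_m}\Theta B'_m)\sqcup\bigsqcup\Theta(\Delta^n)$ (and that $\Psi$ preserves coproducts) to rewrite it, observes that $\Psi\A_m\to\Psi(\A_m\sqcup_{\Theta A_m}\Theta B'_m)$ is a trivial cofibration (section argument), and then compares both sides to $\Psi\C_m$ via Lemma~\ref{scattop1} and $2$-out-of-$3$. Once you correct the comparison object, your argument becomes this one.
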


\begin{proof}
By the same arguments as in \ref{scattop2}, we have
$$\B_{m}=(\A_{m}\bigsqcup_{\Theta A_{m}}\Theta B^{'}_{m})\bigsqcup\bigsqcup_{\beta\in(\Delta^{n}(m)\setminus\partial\Delta^{n}(m))}\Theta(\Delta^{n})  $$ 
with the property that $\A_{m}\rightarrow \A_{m}\bigsqcup_{\Theta A_{m}}\Theta B^{'}_{m}$ is trivial cofibration in $\Cat_{\Top}$, it follows that it admits a section. Consequently 
 $\Psi\A_{m}\rightarrow \Psi(\A_{m}\bigsqcup_{\Theta A_{m}}\Theta B^{'}_{m})$ is a trivial cofibration in $\sSet$. On the other hand,
$$\D_{m}=\C_{m}\bigsqcup_{\A_{m}} \A_{m}\bigsqcup_{\Theta A_{m}}\Theta B^{'}_{m}\bigsqcup\bigsqcup_{\beta\in(\Delta^{n}(m)\setminus\partial\Delta^{n}(m))}\Theta(\Delta^{n}); $$
applying the functor $\Psi$, we have the universal map in $\sSet$ given by 
$$\Psi\C_{m}\bigsqcup_{\Psi\A_{m}}\Psi \B_{m}\rightarrow \Psi(\C_{m}\bigsqcup_{\A_{m}} \B_{m}).$$
Since $\A_{\bullet}\rightarrow\B_{\bullet}$ is obtained as a pushout of a generating cofibration in $\sCat_{\Top}$, we have

$$\Psi\C_{m}\bigsqcup_{\Psi\A_{m}}\Psi \B_{m}= \Psi\C_{m}\bigsqcup_{\Psi \A_{m}}\Psi(\A_{m}\bigsqcup_{\Theta A_{m}}\Theta B^{'}_{m}) \bigsqcup\bigsqcup_{\beta\in(\Delta^{n}(m)\setminus\partial\Delta^{n}(m))} \Psi\Theta(\Delta^{n})$$
Since $\Psi\A_{m}\rightarrow \Psi(\A_{m}\bigsqcup_{\Theta A_{m}}\Theta B^{'}_{m})$ is a trivial cofibration in $\sSet_{\mathbf{K}}$, then 
$$\Psi\C_{m}\rightarrow  \Psi\C_{m}\bigsqcup_{\Psi \A_{m}}\Psi(\A_{m}\bigsqcup_{\Theta A_{m}}\Theta B^{'}_{m})$$
is also a trivial cofibration.
 On the other hand, 
 $$\Psi\C_{m}\rightarrow  \Psi(\C_{m}\bigsqcup_{\Theta A_{m}}\Theta B^{'}_{m})=\Psi(\C_{m}\bigsqcup_{\A_{m}}\A_{m}\bigsqcup_{\Theta A_{m}}\Theta B^{'}_{m})$$
 is an equivalence by \ref{scattop1}.
 We have the commutative diagram:
 
 $$ \xymatrix{ \Psi \C_{m} \ar[rr]^-{\sim}\ar[rd] ^-{\sim} &&    \Psi\C_{m}\bigsqcup_{\Psi \A_{m}}\Psi(\A_{m}\bigsqcup_{\Theta A_{m}}\Theta B^{'}_{m})  \ar[ld]^-{\sim} \\ &   \Psi(\C_{m}\bigsqcup_{\Theta A_{m}}\Theta B^{'}_{m}).}$$
It follows that 
 $$\xymatrix{
  \Psi\C_{m}\bigsqcup_{\Psi \A_{m}}\Psi(\A_{m}\bigsqcup_{\Theta A_{m}}\Theta B^{'}_{m})\bigsqcup\bigsqcup_{\beta\in(\Delta^{n}(m)\setminus\partial\Delta^{n}(m))}\Psi\Theta(\Delta^{n})\ar[d]\\
   \Psi(\C_{m}\bigsqcup_{\Theta A_{m}}\Theta B^{'}_{m})\bigsqcup\bigsqcup_{\beta\in(\Delta^{n}(m)\setminus\partial\Delta^{n}(m))}\Psi\Theta(\Delta^{n})}$$
 is a weak equivalence in $\sSet_{\mathbf{K}}$.
 consequently
 $$\Psi\C_{m}\bigsqcup_{\Psi\A_{m}}\Psi \B_{m}\rightarrow  \Psi \D_{n}$$
is a weak equivalence.\\
For the general case of cellular cofibrations, we write $i:\A_{\bullet}\rightarrow \B_{\bullet}$ as a transfinite composition
$$\A_{\bullet}\rightarrow\A_{\bullet}^{1}\rightarrow\dots\A_{\bullet}^{\alpha}\rightarrow\A_{\bullet}^{\alpha+1}\rightarrow\dots\rightarrow \B_{\bullet}.$$
We pose  $\C^{\alpha}_{\bullet}=\C_{\bullet}\bigsqcup_{\A_{\bullet}}\A^{\alpha}_{\bullet}$, then the morphism  $\C_{\bullet}\rightarrow\D_{\bullet}$ is a transfinite composition
$$\C_{\bullet}\rightarrow\C_{\bullet}^{1}\rightarrow\dots\C_{\bullet}^{\alpha}\rightarrow\C_{\bullet}^{\alpha+1}\rightarrow\dots\rightarrow \D_{\bullet}$$
 By the precedent case: 
 $$\Psi_{\bullet}\A^{\alpha}_{\bullet}\bigsqcup_{\Psi_{\bullet}\A_{\bullet}}\Psi_{\bullet}\C_{\bullet}\rightarrow \Psi_{\bullet}\C_{\bullet}^{\alpha}$$ 
 is a degree-wise weak equivalence. Moreover, $\Psi_{\bullet}\A^{\alpha}_{\bullet}\rightarrow\Psi_{\bullet}\A^{\alpha+1}_{\bullet}$ is a monomorphism is $\sSet^{2}$ by \ref{topcof3}.
 we conclude that:
 $$\colim_{\alpha}\Psi_{\bullet}\A^{\alpha}_{\bullet}\bigsqcup_{\Psi_{\bullet}\A_{\bullet}}\Psi_{\bullet}\C_{\bullet}\rightarrow \colim_{\alpha}\Psi_{\bullet}\C_{\bullet}^{\alpha}$$ 
is a weak equivalence.
 Noting that  $\Psi_{\bullet}$ commutes with directed colimits, we conclude that 
 $$\Psi_{\bullet}\B_{\bullet}\bigsqcup_{\Psi_{\bullet}\A_{\bullet}}\Psi_{\bullet}\C_{\bullet}\rightarrow\Psi_{\bullet}\D_{\bullet}$$
 is a degree-wise weak equivalence and so a diagonal equivalence. 

\end{proof}

\begin{corollary}\label{propgauche3}
Let $i:\A_{\bullet}\rightarrow\B_{\bullet} $ as in \ref{propgauche2}, the the pushout in $\sCat_{\Top}$
of a weak equivalence along $i$ is a weak equivalence.
\end{corollary}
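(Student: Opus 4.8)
The plan is to combine the homotopy-cocartesian conclusion of Lemma \ref{propgauche2} with the fact that the diagonal model structure on $\sSet^{2}$ (equivalently the Moerdijk/diagonal structure) is left proper, and then transport the conclusion back through the characterization of weak equivalences in $\sCat_{\Top}$ given in Theorem \ref{scattop}. Concretely, suppose we are given a pushout square in $\sCat_{\Top}$
$$
\xymatrix{
\A_{\bullet}\ar[r]^{w}\ar[d]_{i}  & \C_{\bullet}\ar[d] \\
 \B_{\bullet}  \ar[r] & \D_{\bullet}
}
$$
with $i$ a cofibration of the type in \ref{propgauche2} (a pushout of a generating cofibration $\Theta_{\bullet}(A)\rightarrow\Theta_{\bullet}(B)$, or more generally a cellular cofibration) and $w$ a weak equivalence in $\sCat_{\Top}$, i.e.\ $\diag\Psi_{\bullet}w$ is a weak equivalence in $\sSet_{\mathbf{K}}$. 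We must show $\diag\Psi_{\bullet}(\C_{\bullet}\rightarrow\D_{\bullet})$ is a weak equivalence in $\sSet_{\mathbf{K}}$.

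First I would apply Lemma \ref{propgauche2} to get that $\Psi_{\bullet}$ sends the square above to a homotopy cocartesian square in $\sSet^{2}_{pr}$; since every projective weak equivalence is a diagonal weak equivalence, and since the comparison map $\Psi_{\bullet}\B_{\bullet}\sqcup_{\Psi_{\bullet}\A_{\bullet}}\Psi_{\bullet}\C_{\bullet}\rightarrow\Psi_{\bullet}\D_{\bullet}$ is even a degree-wise (projective) weak equivalence, it is in particular a diagonal one, so after applying $\diag$ we obtain a weak equivalence $\diag\bigl(\Psi_{\bullet}\B_{\bullet}\sqcup_{\Psi_{\bullet}\A_{\bullet}}\Psi_{\bullet}\C_{\bullet}\bigr)\rightarrow\diag\Psi_{\bullet}\D_{\bullet}$ in $\sSet_{\mathbf{K}}$. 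Because $\diag$ is a left adjoint it commutes with pushouts, so the source is $\diag\Psi_{\bullet}\B_{\bullet}\sqcup_{\diag\Psi_{\bullet}\A_{\bullet}}\diag\Psi_{\bullet}\C_{\bullet}$. Next I would note that $\diag\Psi_{\bullet}i$ is a monomorphism of simplicial sets: by Corollary \ref{topcof3}, $\Psi_{\bullet}i$ is a degree-wise monomorphism in $\sSet^{2}$, hence $\diag\Psi_{\bullet}i$ is a monomorphism in $\sSet$, i.e.\ a cofibration in $\sSet_{\mathbf{K}}$. Since $\sSet_{\mathbf{K}}$ is left proper and $\diag\Psi_{\bullet}w$ is a weak equivalence, its pushout along the cofibration $\diag\Psi_{\bullet}i$, namely $\diag\Psi_{\bullet}\B_{\bullet}\rightarrow\diag\Psi_{\bullet}\B_{\bullet}\sqcup_{\diag\Psi_{\bullet}\A_{\bullet}}\diag\Psi_{\bullet}\C_{\bullet}$, is again a weak equivalence.

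To finish, I would assemble a commuting diagram in $\sSet_{\mathbf{K}}$ containing the maps $\diag\Psi_{\bullet}\C_{\bullet}\rightarrow\diag\Psi_{\bullet}\B_{\bullet}\sqcup_{\diag\Psi_{\bullet}\A_{\bullet}}\diag\Psi_{\bullet}\C_{\bullet}\rightarrow\diag\Psi_{\bullet}\D_{\bullet}$, whose composite is $\diag\Psi_{\bullet}(\C_{\bullet}\rightarrow\D_{\bullet})$: the second map is a weak equivalence by the previous paragraph, but the first map need not be, so instead I would run the symmetric argument. Rather than pushing $w$ forward, observe that the outer composite together with the pushout square lets us identify $\diag\Psi_{\bullet}\D_{\bullet}$ with the pushout and use two-out-of-three: we have a weak equivalence from the pushout to $\diag\Psi_{\bullet}\D_{\bullet}$ (from Lemma \ref{propgauche2} as above) and a weak equivalence $\diag\Psi_{\bullet}\B_{\bullet}\rightarrow$ pushout (left properness of $\sSet_{\mathbf{K}}$), so the composite $\diag\Psi_{\bullet}\B_{\bullet}\rightarrow\diag\Psi_{\bullet}\D_{\bullet}$ is a weak equivalence; combined with $\diag\Psi_{\bullet}w$ being a weak equivalence and the evident commuting square relating $\A_{\bullet},\B_{\bullet},\C_{\bullet},\D_{\bullet}$, two-out-of-three forces $\diag\Psi_{\bullet}(\C_{\bullet}\rightarrow\D_{\bullet})$ to be a weak equivalence, which is exactly the assertion that $\C_{\bullet}\rightarrow\D_{\bullet}$ is a weak equivalence in $\sCat_{\Top}$. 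The main obstacle is bookkeeping: making sure that Lemma \ref{propgauche2} is invoked both for the square at hand and that one correctly passes between the projective and diagonal weak equivalences and commutes $\diag$ past the relevant pushouts; none of the individual steps is deep, but the left-properness input from $\sSet_{\mathbf{K}}$ and the monomorphism statement from Corollary \ref{topcof3} are the two facts that make the argument go through.
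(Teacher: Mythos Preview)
Your core argument is correct and essentially identical to the paper's: apply $\diag\Psi_{\bullet}$, use Corollary~\ref{topcof3} to see that $\diag\Psi_{\bullet}i$ is a cofibration in $\sSet$, use left properness of $\sSet_{\mathbf{K}}$ to conclude that $\diag\Psi_{\bullet}\B_{\bullet}\to X$ (the pushout in $\sSet$) is a weak equivalence, and combine with Lemma~\ref{propgauche2} to get that $\diag\Psi_{\bullet}\B_{\bullet}\to\diag\Psi_{\bullet}\D_{\bullet}$ is a weak equivalence. That is exactly the paper's proof, and at that point you are done.

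The problem is that you have misidentified the target. The ``pushout of $w$ along $i$'' is the map $\B_{\bullet}\to\D_{\bullet}$ (the cobase change of $w$), not $\C_{\bullet}\to\D_{\bullet}$. So your sentence ``We must show $\diag\Psi_{\bullet}(\C_{\bullet}\rightarrow\D_{\bullet})$ is a weak equivalence'' is wrong, and the final paragraph attempting to reach that conclusion is both unnecessary and invalid: two-out-of-three applies to composable pairs, not to commuting squares, and knowing that the top and bottom arrows of a square are weak equivalences does \emph{not} force the right vertical to be one (indeed $\C_{\bullet}\to\D_{\bullet}$ is a cofibration, the cobase change of $i$, and will typically not be a weak equivalence). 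Delete that final step and correct the stated goal to $\B_{\bullet}\to\D_{\bullet}$; what remains is exactly the paper's argument.
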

\begin{proof}
We note the pushout diagram by:
$$
  \xymatrix{
\A_{\bullet}\ar[r]^{\sim}\ar[d]  & \C_{\bullet}\ar[d] \\
 \B_{\bullet}  \ar[r] & \D_{\bullet}.
  }
  $$
 applying the functor $\diag\Psi_{\bullet}$ to the diagram, we obtain a homotopy cocartesian diagram in $\sSet_{pr}^{2}$ . By lemma \ref{topcof3}, the morphism  
  $\Psi_{\bullet}\A_{\bullet}\rightarrow\Psi_{\bullet}\B_{\bullet}$ is a monomorphism in 
  $\sSet^{2}$, consequently $\diag\Psi_{\bullet}\A_{\bullet}\rightarrow\diag\Psi_{\bullet}\B_{\bullet}$ is a cofibration in $\sSet$.
  The following pushout diagram in $\sSet$  summarize the situation:
   $$\xymatrix{
 \diag\Psi_{\bullet} \A_{\bullet}\ar[d] \ar[r]^{\sim}& \diag\Psi_{\bullet}\C_{\bullet} \ar[d]\ar@/^/[rdd]  \\
    \diag\Psi_{\bullet} \B_{\bullet}\ar[r]^{f}  \ar@/_/[rrd]_{t} & X  \ar@{.>}[rd]^{g} \\
     & &  \diag\Psi_{\bullet}\D_{\bullet}
  }$$
Since $\sSet$ is left proper, $f$ is a weak equivalence. Moreover, $g$ is an a weak equivalence by\ref{propgauche2}.consequently, $t$ is a weak equivalence.
\end{proof}

\begin{corollary}\label{propgauche4}
If $i:\A_{\bullet}\rightarrow \B_{\bullet}$ is a cellular cofibration in $\sCat_{\Top}$, then the pushout of a weak equivalence along $i$ is again a weak equivalence.
\end{corollary}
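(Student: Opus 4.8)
The plan is to run the proof of \ref{propgauche3} essentially verbatim, observing that every ingredient used there is already available for an \emph{arbitrary} cellular cofibration. Precisely, the inputs are: the ``more generally'' clause of Lemma \ref{propgauche2}, which says that $\Psi_{\bullet}$ carries a pushout along a cellular cofibration to a homotopy cocartesian square in $\sSet^{2}_{pr}$; Corollary \ref{topcof3}, which says that a cellular cofibration $i\colon\A_{\bullet}\to\B_{\bullet}$ is a degree-wise inclusion and $\Psi_{\bullet}i$ a monomorphism in $\sSet^{2}$; and the left properness of $\sSet_{\mathbf{K}}$.

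Concretely, I would start from a pushout square
\[
\xymatrix{ \A_{\bullet} \ar[r]^{\sim} \ar[d]_{i} & \C_{\bullet} \ar[d] \\ \B_{\bullet} \ar[r] & \D_{\bullet} }
\]
with $i$ cellular and $\A_{\bullet}\to\C_{\bullet}$ a weak equivalence. Applying $\diag\Psi_{\bullet}$ (which preserves pushouts, since $\diag$ and $\Psi_{\bullet}$ do) produces a pushout square in $\sSet_{\mathbf{K}}$ in which, by Corollary \ref{topcof3}, the left-hand map $\diag\Psi_{\bullet}i$ is a monomorphism, hence a cofibration, and the top map is a weak equivalence (it is $\diag\Psi_{\bullet}$ of a weak equivalence). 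Now factor the canonical comparison through the actual pushout $X=\diag\Psi_{\bullet}\B_{\bullet}\sqcup_{\diag\Psi_{\bullet}\A_{\bullet}}\diag\Psi_{\bullet}\C_{\bullet}$. Left properness of $\sSet_{\mathbf{K}}$ makes the leg $f\colon\diag\Psi_{\bullet}\B_{\bullet}\to X$ a weak equivalence; and $X=\diag\bigl(\Psi_{\bullet}\B_{\bullet}\sqcup_{\Psi_{\bullet}\A_{\bullet}}\Psi_{\bullet}\C_{\bullet}\bigr)$, so by the cellular case of Lemma \ref{propgauche2} (a degree-wise, i.e.\ projective, weak equivalence becomes a diagonal weak equivalence after $\diag$) the comparison $g\colon X\to\diag\Psi_{\bullet}\D_{\bullet}$ is a weak equivalence. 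Hence $\diag\Psi_{\bullet}(\B_{\bullet}\to\D_{\bullet})=g\circ f$ is a weak equivalence in $\sSet_{\mathbf{K}}$, which by Theorem \ref{scattop}(1) means that $\B_{\bullet}\to\D_{\bullet}$ is a weak equivalence in $\sCat_{\Top}$, as required.

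I do not expect a genuine obstacle here: all of the real work has already been done in Lemmas \ref{propgauche2} and \ref{topcof3}, both of which treat cellular cofibrations, and the only mildly delicate point --- that a homotopy cocartesian square produced by Lemma \ref{propgauche2} in $\sSet^{2}_{pr}$ stays homotopy cocartesian in $\sSet_{\mathbf{K}}$ after applying $\diag$ --- is taken care of by the facts that every projective weak equivalence is a diagonal weak equivalence and that $\diag$ preserves monomorphisms and colimits. If one prefers to avoid even that remark, \ref{propgauche4} can instead be deduced from the single-pushout statement \ref{propgauche3}: write $i$ as a transfinite composition $\A_{\bullet}\to\A^{1}_{\bullet}\to\cdots\to\A^{\alpha}_{\bullet}\to\cdots\to\B_{\bullet}$ of pushouts of generating cofibrations, set $\C^{\alpha}_{\bullet}=\C_{\bullet}\sqcup_{\A_{\bullet}}\A^{\alpha}_{\bullet}$, apply \ref{propgauche3} at each successor stage, and pass to the colimit, using that $\diag\Psi_{\bullet}$ commutes with directed colimits (Lemma \ref{scattop5}) and that, along the degree-wise monomorphisms supplied by Corollary \ref{topcof3}, a transfinite composition of weak equivalences of simplicial sets is again a weak equivalence.
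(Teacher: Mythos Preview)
Your argument is correct, and in fact you offer two routes, the second of which --- writing $i$ as a transfinite composition, setting $\C^{\alpha}_{\bullet}=\C_{\bullet}\sqcup_{\A_{\bullet}}\A^{\alpha}_{\bullet}$, and inducting via Corollary~\ref{propgauche3} --- is exactly the paper's proof. Your primary route is slightly different and in fact cleaner: since the ``more generally'' clause of Lemma~\ref{propgauche2} already handles arbitrary cellular cofibrations, one can rerun the diagram-chase of Corollary~\ref{propgauche3} once and for all, rather than reassembling the cellular case from its pieces by a second transfinite induction. The paper effectively performs the transfinite argument twice (once inside \ref{propgauche2}, once in \ref{propgauche4}); your first approach avoids that redundancy.

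One wording issue to fix: the parenthetical ``$\diag\Psi_{\bullet}$ (which preserves pushouts, since $\diag$ and $\Psi_{\bullet}$ do)'' is false --- $\Psi_{\bullet}$ is a \emph{right} adjoint and certainly does not preserve pushouts. Fortunately your argument does not actually use this: in the very next sentence you form the genuine pushout $X$ in $\sSet$ and compare it to $\diag\Psi_{\bullet}\D_{\bullet}$ via Lemma~\ref{propgauche2}, which is the correct move. Just delete the misleading parenthetical and say that applying $\diag\Psi_{\bullet}$ yields a commutative square whose comparison map to the pushout $X$ is then analyzed.
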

\begin{proof}
Consider the following pushout :
$$
  \xymatrix{
\A_{\bullet}\ar[r]^{\sim}\ar[d]  & \C_{\bullet}\ar[d] \\
 \B_{\bullet}  \ar[r] & \D_{\bullet}.
  }
  $$
We write $i:\A_{\bullet}\rightarrow \B_{\bullet}$ as a transfinite composition of morphisms as described in corollary \ref{propgauche3} i.e.,
$$\A_{\bullet}\rightarrow\A_{\bullet}^{1}\rightarrow\dots\A_{\bullet}^{\alpha}\rightarrow\A_{\bullet}^{\alpha+1}\rightarrow\dots\rightarrow \B_{\bullet}.$$
If we pose $\C^{\alpha}_{\bullet}=\C_{\bullet}\bigsqcup_{\A_{\bullet}}\A^{\alpha}_{\bullet}$, then the morphism $\C_{\bullet}\rightarrow\D_{\bullet}$ is the transfinite composition  
$$\C_{\bullet}\rightarrow\C_{\bullet}^{1}\rightarrow\dots\C_{\bullet}^{\alpha}\rightarrow\C_{\bullet}^{\alpha+1}\rightarrow\dots\rightarrow \D_{\bullet}.$$
By corollary \ref{propgauche3} $\diag\Psi_{\bullet}\A^{\alpha}_{\bullet}\rightarrow \diag\Psi_{\bullet}\C^{\alpha}_{\bullet}$ is a weak equivalence in $\sSet_{\mathbf{K}}$. 
%De plus 
%$$\C^{\alpha}_{\bullet}\rightarrow\C^{\alpha+1}_{\bullet}$$ est une cofibration dans $\sCat_{\Top}$.
We conclude that 
$$\B_{\bullet}=\colim _{\alpha}\A^{\alpha}_{\bullet}\rightarrow \colim_{\alpha}\C^{\alpha}_{\bullet}=\D_{\bullet}$$
is a weak equivalence in $\sCat_{\Top}$.
\end{proof}

\begin{lemma}\label{propgauche5}
If $i^{'}:\A^{'}_{\bullet}\rightarrow \B^{'}_{\bullet}$ is a retract of a cellular cofibration in $\sCat_{\Top}$, then the  pushout of a weak equivalence along  $i^{'}$ is again a weak equivalence.
\end{lemma}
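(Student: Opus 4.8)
The plan is to reduce the statement for a retract to the case of a genuine cellular cofibration, which is exactly Corollary \ref{propgauche4}. So suppose we are given a pushout square
$$
  \xymatrix{
\A^{'}_{\bullet}\ar[r]^{\sim}\ar[d]_{i^{'}}  & \C_{\bullet}\ar[d] \\
 \B^{'}_{\bullet}  \ar[r] & \D^{'}_{\bullet}
  }
  $$
with the top map a weak equivalence, together with a retract diagram exhibiting $i^{'}$ as a retract of a cellular cofibration $i:\A_{\bullet}\rightarrow\B_{\bullet}$, i.e.\ maps $\A^{'}_{\bullet}\rightarrow\A_{\bullet}\rightarrow\A^{'}_{\bullet}$ and $\B^{'}_{\bullet}\rightarrow\B_{\bullet}\rightarrow\B^{'}_{\bullet}$ composing to the identities and commuting with $i,i^{'}$.

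The first step is to form the pushout of the weak equivalence $\A^{'}_{\bullet}\rightarrow\C_{\bullet}$ along $\A_{\bullet}$; more precisely, consider the composite $\A_{\bullet}\rightarrow\A^{'}_{\bullet}\xrightarrow{\sim}\C_{\bullet}$ and take its pushout along the cellular cofibration $i:\A_{\bullet}\rightarrow\B_{\bullet}$, obtaining $\D_{\bullet}:=\B_{\bullet}\sqcup_{\A_{\bullet}}\C_{\bullet}$. I would check that $\D^{'}_{\bullet}$ is a retract of $\D_{\bullet}$ under $\C_{\bullet}$: the section $\A^{'}_{\bullet}\rightarrow\A_{\bullet}$ and retraction $\B_{\bullet}\rightarrow\B^{'}_{\bullet}$, being compatible with the structure maps and with the map to $\C_{\bullet}$, induce on pushouts maps $\D^{'}_{\bullet}\rightarrow\D_{\bullet}\rightarrow\D^{'}_{\bullet}$ whose composite is the identity; this is a purely formal diagram chase using the universal property of pushouts. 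Now the map $\C_{\bullet}\rightarrow\D^{'}_{\bullet}$ is a retract of the map $\C_{\bullet}\rightarrow\D_{\bullet}$ in the arrow category.

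The second step is to observe that $\C_{\bullet}\rightarrow\D_{\bullet}$ is a weak equivalence: it is the pushout of the weak equivalence $\A_{\bullet}\rightarrow\C_{\bullet}$ (which is a weak equivalence by the two-out-of-three property applied to $\A_{\bullet}\rightarrow\A^{'}_{\bullet}\xrightarrow{\sim}\C_{\bullet}$, once we know $\A_{\bullet}\rightarrow\A^{'}_{\bullet}$ is a weak equivalence — but since this map admits a section and is part of a retract of weak equivalences, this follows; more directly, $\A_{\bullet}\rightarrow\A^{'}_{\bullet}\rightarrow\A_{\bullet}$ makes $\A^{'}_{\bullet}$ a retract of $\A_{\bullet}$ so we may instead just note that $\A_{\bullet}\rightarrow\C_{\bullet}$ need not be an equivalence a priori) — here is the cleaner route: form $\D_{\bullet}$ instead as the pushout $\B_{\bullet}\sqcup_{\A^{'}_{\bullet}}\C_{\bullet}$ along the composite $\B^{'}_{\bullet}$-free map $\A^{'}_{\bullet}\rightarrow\A_{\bullet}\rightarrow\B_{\bullet}$, which is a cellular cofibration precomposed with a section hence still admits all needed retractions; then $\C_{\bullet}\rightarrow\D_{\bullet}$ is literally the pushout of the weak equivalence $\A^{'}_{\bullet}\xrightarrow{\sim}\C_{\bullet}$ along the cellular cofibration $\A^{'}_{\bullet}\rightarrow\B_{\bullet}$, hence a weak equivalence by Corollary \ref{propgauche4}. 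Finally, weak equivalences in $\sCat_{\Top}$ are defined by applying $\diag\Psi_{\bullet}$ and testing in $\sSet_{\mathbf{K}}$, and the class of weak equivalences in any model category — in particular $\sSet_{\mathbf{K}}$ — is closed under retracts; since $\diag\Psi_{\bullet}$ is a functor, a retract of weak equivalences in $\sCat_{\Top}$ is again a weak equivalence. Applying this to the retract $(\C_{\bullet}\rightarrow\D^{'}_{\bullet})$ of $(\C_{\bullet}\rightarrow\D_{\bullet})$ gives the claim.

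The main obstacle is bookkeeping in the first step: one must be careful that the precomposition $\A^{'}_{\bullet}\rightarrow\A_{\bullet}\rightarrow\B_{\bullet}$ is a morphism along which Corollary \ref{propgauche4} still applies, i.e.\ that pushing out a weak equivalence along it still yields a weak equivalence. Since Corollary \ref{propgauche4} is stated for cellular cofibrations and this composite need not be a cofibration, the honest fix is to run the retract argument at the level of the \emph{whole} pushout square rather than trying to replace $i^{'}$ by a cofibration before pushing out: realize the given square $(\A^{'}_{\bullet}\rightarrow\C_{\bullet},\,i^{'})$ as a retract, in the category of pushout squares, of the square $(\A_{\bullet}\rightarrow\C_{\bullet},\,i)$ where $\A_{\bullet}\rightarrow\C_{\bullet}$ is the composite through $\A^{'}_{\bullet}$; this composite is a weak equivalence because $\A_{\bullet}\rightarrow\A^{'}_{\bullet}$ is one (it has a section $\A^{'}_{\bullet}\rightarrow\A_{\bullet}$ and, being a retract of the identity through the cellular-cofibration retract data, $\diag\Psi_{\bullet}$ of it is a weak equivalence in $\sSet_{\mathbf{K}}$ by two-out-of-three in that retract), so Corollary \ref{propgauche4} applies to the upstairs square, and then retracts of weak equivalences finish the argument as above.
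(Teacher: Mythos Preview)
There is a genuine gap. Your reduction to Corollary~\ref{propgauche4} requires the top map of the \emph{big} square, namely the composite $\A_{\bullet}\to\A'_{\bullet}\xrightarrow{\sim}\C_{\bullet}$, to be a weak equivalence; for that you need the retraction $\A_{\bullet}\to\A'_{\bullet}$ to be one. It is not, in general. A retraction with a section is not a retract of an identity in the arrow category (the square you would need forces $\A_{\bullet}\to\A'_{\bullet}\to\A_{\bullet}$ to be the identity, which is the wrong composite), and two-out-of-three gives you nothing here. Your alternative of pushing out along the composite $\A'_{\bullet}\to\A_{\bullet}\to\B_{\bullet}$ fails for the reason you yourself note: that composite is not a cellular cofibration, so Corollary~\ref{propgauche4} does not apply. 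In short, Corollary~\ref{propgauche4} alone is too weak to run the retract argument, because it only controls pushouts of \emph{weak equivalences} along cellular cofibrations, and the map you must push out in the enlarged square is not one.

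The paper avoids this by invoking the stronger Lemma~\ref{propgauche2} instead: for a cellular cofibration $i$, the functor $\Psi_{\bullet}$ sends \emph{every} pushout square along $i$ to a homotopy pushout in $\sSet^{2}_{pr}$, with no hypothesis on the other leg. Hence the comparison map
\[
t:\ \Psi_{\bullet}\B_{\bullet}\bigsqcup_{\Psi_{\bullet}\A_{\bullet}}\Psi_{\bullet}\C_{\bullet}\longrightarrow \Psi_{\bullet}\D_{\bullet}
\]
is a weak equivalence regardless of whether $\A_{\bullet}\to\C_{\bullet}$ is; its retract $t'$ (for the primed square) is then a weak equivalence too. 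One finishes inside $\sSet$: by Corollary~\ref{topcof3} the map $\diag\Psi_{\bullet}i'$ is a monomorphism, so left properness of $\sSet_{\mathbf{K}}$ makes $\diag\Psi_{\bullet}\B'_{\bullet}\to\diag(\Psi_{\bullet}\B'_{\bullet}\sqcup_{\Psi_{\bullet}\A'_{\bullet}}\Psi_{\bullet}\C_{\bullet})$ a weak equivalence, and composing with $t'$ gives that $\B'_{\bullet}\to\D'_{\bullet}$ is a weak equivalence. (Note also that this last map, not $\C_{\bullet}\to\D'_{\bullet}$, is the ``pushout of the weak equivalence along $i'$'' that the lemma asks about.)
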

\begin{proof}
By hypothesis, $i^{'}:\A^{'}_{\bullet}\rightarrow \B^{'}_{\bullet}$ is a retract of some cellular cofibration $i:\A_{\bullet}\rightarrow \B_{\bullet}$. Let the following pushout diagram in $\sCat_{\Top}$ 
$$
  \xymatrix{
\A^{'}_{\bullet}\ar[r]^{\sim}\ar[d]^{i^{'}}  & \C_{\bullet}\ar[d]^{j^{'}} \\
 \B^{'}_{\bullet}  \ar[r] & \B^{'}_{\bullet} \bigsqcup_{\A^{'}_{\bullet} }\C_{\bullet}.
  }
  $$
The retraction between $i$ and $i^{'}$ induces a retraction between  $\C_{\bullet}\rightarrow  \B^{'}_{\bullet} \bigsqcup_{\A^{'}_{\bullet} }\C_{\bullet}=\D^{'}_{\bullet}$ and $\C_{\bullet}\rightarrow\B_{\bullet} \bigsqcup_{\A_{\bullet} }\C_{\bullet}=\D_{\bullet}$.
Consequently,
 $$t^{'}:~ \Psi_{\bullet}\B^{'}_{\bullet} \bigsqcup_{\Psi_{\bullet}\A^{'}_{\bullet} }\Psi_{\bullet}\C_{\bullet} \rightarrow\Psi_{\bullet}\D^{'}_{\bullet}$$ 
 is a retract of 
 $$t:~  \Psi_{\bullet}\B_{\bullet} \bigsqcup_{\Psi_{\bullet}\A_{\bullet} }\Psi_{\bullet}\C_{\bullet}\rightarrow\Psi_{\bullet}\D_{\bullet}.$$ 
 By lemma \ref{propgauche2}, the map $t$ is a weak equivalence and so $t^{'}$ a weak equivalence
  (by retract).
 The map $\diag\Psi_{\bullet}\A_{\bullet}^{'}\rightarrow\diag \Psi_{\bullet}\B_{\bullet}^{'}$ is a cofibration in  $\sSet$ by lemma \ref{topcof3}, so
 $$\Psi_{\bullet}\B_{\bullet}^{'}\rightarrow  \Psi_{\bullet}\B^{'}_{\bullet} \bigsqcup_{\Psi_{\bullet}\A^{'}_{\bullet} }\Psi_{\bullet}\C_{\bullet}$$
 is an weak equivalence (diagonal)  in $\sSet^{2}_{d}$ since $\sSet$ is left proper. Consequently, 
 $$ \Psi_{\bullet}\B_{\bullet}^{'}\rightarrow \Psi_{\bullet}\D_{\bullet}^{'}$$
 is a diagonal equivalence since $t^{'}$ is degree-wise equivalence.
  \end{proof}
\begin{theorem}\label{propgauche}[\textbf{B}]
The model category $\sCat_{\Top}$ is left proper.
\end{theorem}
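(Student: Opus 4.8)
The plan is to obtain left properness as an immediate consequence of the chain of lemmas assembled in this section. Recall that $\sCat_{\Top}$ being left proper means exactly that in any pushout square
$$\xymatrix{
\A_{\bullet}\ar[r]^{\sim}\ar[d]_{i} & \C_{\bullet}\ar[d] \\
\B_{\bullet}\ar[r] & \D_{\bullet}
}$$
in which $i$ is a cofibration and the top horizontal map is a weak equivalence, the bottom map $\B_{\bullet}\to\D_{\bullet}$ is again a weak equivalence. So I would fix such a square and deduce the conclusion by reducing to the case already treated in Lemma \ref{propgauche5}.

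The reduction is the standard retract argument for cofibrantly generated model categories. Since the model structure on $\sCat_{\Top}$ provided by Theorem \ref{scattop} is cofibrantly generated with generating cofibrations $\Theta_{\bullet}\mathrm{d}_{\ast}(\partial\Delta^{n})\to\Theta_{\bullet}\mathrm{d}_{\ast}(\Delta^{n})$, the small object argument writes every cofibration $i:\A_{\bullet}\to\B_{\bullet}$ as a retract, relative to its domain $\A_{\bullet}$, of a transfinite composite of pushouts of generating cofibrations; this is precisely what the preceding lemmas call a cellular cofibration. Hence $i$ is a retract of a cellular cofibration, Lemma \ref{propgauche5} applies verbatim to the square above, and it yields that $\B_{\bullet}\to\D_{\bullet}$ is a weak equivalence, which is the assertion.

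Thus essentially no new argument is required here: the substance of the proof sits upstream. The decisive input is Lemma \ref{propgauche2}, where the fact that every object of $\Cat_{\Top}$ is fibrant forces the corestriction $\A_{m}\to\A_{m}\sqcup_{\Theta A_{m}}\Theta B'_{m}$ of a generating cofibration to admit a section, so that $\Psi$ converts the relevant degreewise pushouts into homotopy pushouts in $\sSet^{2}_{pr}$, hence into homotopy pushouts in $\sSet^{2}_{d}$; left properness of $\sSet_{\mathbf{K}}$ then handles the pushout of a weak equivalence after applying $\diag$, and the transfinite-composition and retract bookkeeping in Lemmas \ref{propgauche3}--\ref{propgauche5} propagates this from generating cofibrations to all cofibrations. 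The only genuine obstacle — the failure of $\Psi_{\bullet}$ to preserve pushouts on the nose, circumvented through these sections — has therefore already been overcome, and the present theorem is a formal corollary of \ref{propgauche5}.
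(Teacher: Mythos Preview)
Your proposal is correct and follows exactly the paper's approach: the paper's proof of Theorem \ref{propgauche} is the one-line statement that it is a direct consequence of Lemma \ref{propgauche5}. Your added explanation that every cofibration in a cofibrantly generated model category is a retract of a cellular one is precisely the implicit step the paper leaves to the reader.
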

\begin{proof} It is a direct consequence from \ref{propgauche5}.
\end{proof}

%%%%%%%%%%%%%%%%%%%%%%%%%%%%%%%%%%%%%%%%%%%%%%%%%%%
%%%%%%%%%%%%%%%%%%%%%%%%%%%%%%%%%%%%%%%%%%%%%%%%%%%
%%%%%%%%%%%%%%%%%%%%%%%%%%%%%%%%%%%%%%%%%%%%%%%%%%%
%%%%%%%%%%%%%%%%%% cellular  %%%%%%%%%%%%%%%%%%%%%%%%%%%%
%%%%%%%%%%%%%%%%%%%%%%%%%%%%%%%%%%%%%%%%%%%%%%%%%%%
%%%%%%%%%%%%%%%%%%%%%%%%%%%%%%%%%%%%%%%%%%%%%%%%%%%

\section{Cellularity of $\sCat_{\Top}$ }\label{section3}
In this section, we prove that  $\sCat_{\Top}$ is a cellular model category (cf \cite{Hirs}).
\begin{lemma}\label{celltop1}
The domains and codomains of generating cofibration of the diagonal model structure on $\sCat_{\Top}$ are compact.
\end{lemma}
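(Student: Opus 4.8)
The plan is to show that the generating cofibrations $\Theta_{\bullet}\mathrm{d}_{\ast}\partial\Delta^{n}\rightarrow\Theta_{\bullet}\mathrm{d}_{\ast}\Delta^{n}$ have domain and codomain that are compact objects of $\sCat_{\Top}$ in the sense of Hirschhorn \cite{Hirs}, namely that the functor $\homs_{\sCat_{\Top}}(X,-)$ commutes (up to a fixed cardinal bound) with filtered colimits of inclusions. First I would reduce the question to a single topological category: since $\mathrm{d}_{\ast}\Delta^{n}=\bigsqcup_{\beta\in\Delta^{n}}\Delta^{n}$ is a \emph{finite} coproduct of copies of $\Delta^{n}$, and $\Theta_{\bullet}$ is defined degreewise from $\Theta$ and preserves coproducts (being a left adjoint), it suffices to show that $\Theta(\Delta^{n})$ and, more to the point, the simplicial-degreewise pieces $\Theta(C^{\beta})$, $\Theta(\Delta^{n})$ appearing in $\Theta_{\bullet}\mathrm{d}_{\ast}\partial\Delta^{n}$ and $\Theta_{\bullet}\mathrm{d}_{\ast}\Delta^{n}$ are compact in $\Cat_{\Top}$, together with the observation that a finite colimit of compact objects is compact, and that a simplicial object built levelwise from finitely many compact pieces in each of finitely many relevant degrees is compact in $\sCat_{\Top}$ (where colimits are computed degreewise, by \ref{scattop5} and the proof of \ref{scattop}).

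The core step is therefore: $\Theta(K)=|\Xi k_{!}K|$ is a compact object of $\Cat_{\Top}$ whenever $K$ is a finite simplicial set. Here I would invoke the structure established in \cite{Amrani1}: the model category $\Cat_{\Top}$ is cofibrantly generated and, crucially, $|-|:\Cat_{\sSet}\rightarrow\Cat_{\Top}$ and $\Xi:\sSet_{\mathbf{Q}}\rightarrow\Cat_{\sSet}$ are left Quillen functors that preserve filtered colimits (indeed $\sing$ commutes with directed colimits by \ref{scattop5}, so its left adjoint sends small objects to small objects, and $\Xi k_{!}$ sends finite simplicial sets to finitely-presentable simplicial categories since $\Xi(\Delta^{n})$ is small in $\Cat_{\sSet}$, as already used in \ref{scattop5}). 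Chaining these, $\Theta$ carries the finite simplicial set $K$ to a compact topological category: concretely, $\homs_{\Cat_{\Top}}(\Theta K,\colim_{\alpha}\C_{\alpha})\cong\homs_{\sSet}(K,\Psi\,\colim_{\alpha}\C_{\alpha})\cong\homs_{\sSet}(K,\colim_{\alpha}\Psi\C_{\alpha})\cong\colim_{\alpha}\homs_{\sSet}(K,\Psi\C_{\alpha})$ for filtered colimits along inclusions, using the adjunction $(\Theta,\Psi)$, the fact that $\Psi=k^{!}\widetilde{\N}\sing$ commutes with directed colimits (\ref{scattop5}), and finiteness of $K$.

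Passing back up, for the bisimplicial generating data one uses that $\mathrm{d}_{\ast}\Delta^{n}$, $\mathrm{d}_{\ast}\partial\Delta^{n}$ are, in each simplicial degree $m$, finite coproducts of finite simplicial sets, and are levelwise eventually constant/determined by finitely many degrees; hence $\Theta_{\bullet}\mathrm{d}_{\ast}\Delta^{n}$ and $\Theta_{\bullet}\mathrm{d}_{\ast}\partial\Delta^{n}$ are finite colimits (in $\sCat_{\Top}$) of objects of the form $\Theta(K)$ with $K$ finite, and filtered colimits in $\sCat_{\Top}$ are computed degreewise, so compactness is inherited. The main obstacle I anticipate is the bookkeeping of \emph{which} cardinal of smallness is uniform across all these pieces — i.e. verifying the compactness is with respect to a single cardinal $\kappa$ and the class of \emph{inclusions} (cellular cofibrations), not merely filtered colimits — but since each $\Theta(\Delta^{n})$ is $\omega$-small by the argument above and only finitely many such building blocks occur, $\kappa=\omega$ works, and the inclusion hypothesis is exactly what \ref{topcof3} guarantees is stable under the relevant pushouts, so the finite-colimit-of-compacts argument goes through without friction.
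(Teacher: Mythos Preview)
Your core mechanism is exactly the paper's: use the adjunction and the fact that the right adjoint commutes with directed colimits to transfer compactness from $\sSet$. The difference is where you apply it. You drop to $\Cat_{\Top}$, prove $\Theta(K)$ compact there via $\homs_{\Cat_{\Top}}(\Theta K,-)\cong\homs_{\sSet}(K,\Psi(-))$, and then try to reassemble to $\sCat_{\Top}$. The paper instead runs the identical adjunction argument one level up: for $\A_{\bullet}=\Theta_{\bullet}\mathrm{d}_{\ast}X$ one has $\homs_{\sCat_{\Top}}(\A_{\bullet},-)\cong\homs_{\sSet}(X,\diag\Psi_{\bullet}(-))$, and since $\diag\Psi_{\bullet}$ commutes with directed colimits (Lemma~\ref{scattop5}) and $X=\Delta^{n}$ or $\partial\Delta^{n}$ is compact in $\sSet$, compactness of $\A_{\bullet}$ relative to any presented cellular cofibration follows immediately.

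Your reassembly step, by contrast, has a genuine gap. The assertion that $\mathrm{d}_{\ast}\Delta^{n}=\bigsqcup_{\beta\in\Delta^{n}}\Delta^{n}$ is a \emph{finite} coproduct is false as stated: this description is degreewise, and in degree $m$ the index set $\Delta^{n}(m)$ has cardinality $\binom{n+m}{m}$, which is unbounded in $m$. Likewise the claim that these objects are ``levelwise eventually constant/determined by finitely many degrees'' is not correct for $\mathrm{d}_{\ast}$. So $\Theta_{\bullet}\mathrm{d}_{\ast}\Delta^{n}$ is not exhibited as a finite colimit of constant simplicial objects, and the ``finite colimit of compacts is compact'' step does not apply. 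The fix is simply to skip the detour: your own chain of isomorphisms, rewritten with $(\Theta_{\bullet}\mathrm{d}_{\ast},\diag\Psi_{\bullet})$ in place of $(\Theta,\Psi)$, \emph{is} the proof.
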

\begin{proof}
Suppose that $\C_{\bullet}\rightarrow\D_{\bullet}$ is a cellular cofibration $\sCat_{\Top}$.
Let $\A_{\bullet}\rightarrow\D_{\bullet}$ be a morphism where  $\A_{\bullet}=\Theta_{\bullet}\mathrm{d}_{\ast}X$ is a (co)domain of  some generating cofibration $\sCat_{\Top}$.
The map $\C_{\bullet}\rightarrow\D_{\bullet}$ is written as transfinite composition 
$$ \C_{\bullet}=\C_{\bullet}^{0}\rightarrow\C_{\bullet}^{1}\dots \C_{\bullet}^{s}\rightarrow\dots \D_{\bullet}.$$
Applying the functor $\diag\Psi_{\bullet}$ to this diagram, we obtain:
$$ \diag\Psi_{\bullet}\C_{\bullet}=\diag\Psi_{\bullet}\C_{\bullet}^{0}\rightarrow\diag\Psi_{\bullet}\C_{\bullet}^{1}\dots \rightarrow\diag\Psi_{\bullet}\C_{\bullet}^{s}\rightarrow\dots \diag\Psi_{\bullet}\D_{\bullet}.$$
But $\diag\Psi_{\bullet}\C_{\bullet}^{s}\rightarrow\diag\Psi_{\bullet}\C_{\bullet}^{s+1}$  is a cofibration in  $\sSet$ by \ref{topcof1}.
By adjunction, a map  $\A_{\bullet}\rightarrow\D_{\bullet}$ is the same thing as giving a map $f$ in $\sSet$
$f:X\rightarrow\diag\Psi_{\bullet}\D_{\bullet}$. Since $X$ is compact in $\sSet$, this imply that  $f$ is factored for a certain $s$ by $g:X\rightarrow \diag\Psi_{\bullet}\C_{\bullet}^{s}$.
Using the adjunction again, we conclude that $\A_{\bullet}\rightarrow\D_{\bullet}$ is factored by $\Theta_{\bullet}\mathrm{d}_{\ast}X\rightarrow\C_{\bullet}^{s}$.
\end{proof}

\begin{lemma}\label{celltop2}
The domains of generating acyclic cofibration in $\sCat_{\Top}$ are small relatively to the cellular cofibration.
\end{lemma}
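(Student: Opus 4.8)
The plan is to deduce smallness directly from the adjunction $(\Theta_{\bullet}\mathrm{d}_{\ast},\diag\Psi_{\bullet})$ of Theorem \ref{scattop} together with the elementary fact that the horn $\Lambda_i^n$ is a finite simplicial set. Recall that the domains of the generating acyclic cofibrations are the objects $\Theta_{\bullet}\mathrm{d}_{\ast}\Lambda_i^n$, and that for every $\C_{\bullet}$ in $\sCat_{\Top}$ there is a natural bijection
$$\homs_{\sCat_{\Top}}(\Theta_{\bullet}\mathrm{d}_{\ast}\Lambda_i^n,\C_{\bullet})\;\cong\;\homs_{\sSet}(\Lambda_i^n,\diag\Psi_{\bullet}\C_{\bullet}).$$
Thus it is enough to check that $\diag\Psi_{\bullet}$ commutes with the transfinite compositions along which smallness is tested, and that $\homs_{\sSet}(\Lambda_i^n,-)$ commutes with directed colimits.

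First I would record that $\diag\Psi_{\bullet}$ commutes with directed (indeed arbitrary filtered) colimits. The functor $\diag$ is a left adjoint, hence commutes with all colimits; colimits in $\sCat_{\Top}$ and in $\sSet^{2}$ are computed degree-wise, because they are so in $\Cat_{\Top}$ and in $\sSet$; and degree-wise the functor $\Psi=k^{!}\widetilde{\N}\sing$ commutes with directed colimits by Lemma \ref{scattop5}. Therefore $\diag\Psi_{\bullet}\bigl(\colim_{\beta<\lambda}\C_{\bullet}^{\beta}\bigr)\cong\colim_{\beta<\lambda}\diag\Psi_{\bullet}\C_{\bullet}^{\beta}$ for every $\lambda$-sequence in $\sCat_{\Top}$, in particular for every $\lambda$-sequence of cellular cofibrations.

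Next, since $\Lambda_i^n$ has only finitely many nondegenerate simplices, it is a finitely presentable object of $\sSet$, so $\homs_{\sSet}(\Lambda_i^n,-)$ preserves all filtered colimits of simplicial sets. Combining this with the previous paragraph and the adjunction above, for every limit ordinal $\lambda$ and every $\lambda$-sequence $\C_{\bullet}^{0}\to\C_{\bullet}^{1}\to\cdots$ of cellular cofibrations in $\sCat_{\Top}$ we get
\begin{align*}
\colim_{\beta<\lambda}\homs_{\sCat_{\Top}}(\Theta_{\bullet}\mathrm{d}_{\ast}\Lambda_i^n,\C_{\bullet}^{\beta})
&\cong\colim_{\beta<\lambda}\homs_{\sSet}(\Lambda_i^n,\diag\Psi_{\bullet}\C_{\bullet}^{\beta})\\
&\cong\homs_{\sSet}(\Lambda_i^n,\colim_{\beta<\lambda}\diag\Psi_{\bullet}\C_{\bullet}^{\beta})\\
&\cong\homs_{\sSet}(\Lambda_i^n,\diag\Psi_{\bullet}\colim_{\beta<\lambda}\C_{\bullet}^{\beta})\\
&\cong\homs_{\sCat_{\Top}}(\Theta_{\bullet}\mathrm{d}_{\ast}\Lambda_i^n,\colim_{\beta<\lambda}\C_{\bullet}^{\beta}),
\end{align*}
so $\Theta_{\bullet}\mathrm{d}_{\ast}\Lambda_i^n$ is $\aleph_0$-small relative to the cellular cofibrations (in fact relative to all of $\sCat_{\Top}$).

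The argument is essentially bookkeeping: the only genuine input is the commutation of $\Psi$ with directed colimits, which is Lemma \ref{scattop5}, plus the cocompleteness of $\Cat_{\Top}$ that makes degree-wise colimits in $\sCat_{\Top}$ available. The point to be careful about is that one does \emph{not} need the monomorphism property of $\diag\Psi_{\bullet}$ on cellular cofibrations (Corollary \ref{topcof3}) here: finiteness of $\Lambda_i^n$ already forces $\homs_{\sSet}(\Lambda_i^n,-)$ to commute with arbitrary filtered colimits, not merely with colimits along monomorphisms, so no restriction on the maps in the $\lambda$-sequence is needed.
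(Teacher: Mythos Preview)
Your proof is correct and is essentially the paper's own argument: transfer the smallness question along the adjunction $(\Theta_{\bullet}\mathrm{d}_{\ast},\diag\Psi_{\bullet})$, use that $\diag\Psi_{\bullet}$ preserves directed colimits (via Lemma~\ref{scattop5}), and invoke smallness of simplicial sets. The only cosmetic difference is that the paper appeals to the fact that \emph{all} objects of $\sSet$ are small rather than to the finiteness of $\Lambda_i^n$ specifically, and your closing remark that Corollary~\ref{topcof3} is unnecessary here is an accurate extra observation.
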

\begin{proof}
We use the same notations as in lemma \ref{celltop1}. Let  $\colim_{s} \C_{\bullet}^{s}$, such that $\C_{\bullet}^{i}\rightarrow\C_{\bullet}^{i+1}$ be a directed colimit  which is a cellular cofibration.
The goal is to prove that $\homs_{\sCat_{\Top}}(\A_{\bullet},-)$ commutes with directed colimits, where $\A_{\bullet}=\Theta_{\bullet}\mathrm{d}_{\ast}X$ is a domain of an acyclic cofibration  in $\sCat_{\Top}$.
Again, by adjunction  we have,
$$\homs_{\sCat_{\Top}}(\A_{\bullet},\colim_{s} \C_{\bullet}^{s})=\homs_{\sSet}(X,\diag\Phi_{\bullet}\colim_{s}\C_{\bullet}^{s}).$$
But $\diag\Phi_{\bullet}$ commutes with directed colimits, so 
$$\homs_{\sSet}(X,\diag\Phi_{\bullet}\colim_{s}\C_{\bullet}^{s})=\homs_{\sSet}(X,\colim_{s}\diag\Phi_{\bullet}\C_{\bullet}^{s}).$$
But all objects in $\sSet$ are small. Consequently:
$$\homs_{\sSet}(X,\colim_{s}\diag\Phi_{\bullet}\C_{\bullet}^{s})=\colim_{s}\homs_{\sSet}(X,\diag\Phi_{\bullet}\C_{\bullet}^{s}).$$
Finally, we conclude by adjunction that
$$\homs_{\sCat_{\Top}}(\A_{\bullet},\colim_{s} \C_{\bullet}^{s})=\colim_{s}\homs_{\sCat_{\Top}}(\A_{\bullet}, \C_{\bullet}^{s}).$$
\end{proof}
\begin{lemma}\label{celltop3}
The cofibration in $\sCat_{\Top}$ are effective monomorphisms.
\end{lemma}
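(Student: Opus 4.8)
The plan is to recall that a morphism $i\colon \A_{\bullet}\to \B_{\bullet}$ is an \emph{effective monomorphism} when $\A_{\bullet}$ is the equalizer of the pair of canonical maps $\B_{\bullet}\rightrightarrows \B_{\bullet}\sqcup_{\A_{\bullet}}\B_{\bullet}$. By Corollary \ref{topcof3} we already know that any cofibration $i$ in $\sCat_{\Top}$ is a degree-wise inclusion of topological categories, and moreover that $\Psi_{\bullet}i$ is a degree-wise monomorphism of bisimplicial sets. So the first step is to reduce the claim to a statement that can be checked degree-wise and in $\Cat_{\Top}$: since limits and colimits in $\sCat_{\Top}$ are computed level-wise (as remarked in the proof of \ref{scattop2}), $\A_{\bullet}$ is the equalizer of $\B_{\bullet}\rightrightarrows \B_{\bullet}\sqcup_{\A_{\bullet}}\B_{\bullet}$ if and only if for every $m$, $\A_{m}$ is the equalizer of $\B_{m}\rightrightarrows \B_{m}\sqcup_{\A_{m}}\B_{m}$ in $\Cat_{\Top}$. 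Thus it suffices to show that a degree-wise inclusion of topological categories arising as (a retract of) a cellular cofibration is an effective monomorphism in $\Cat_{\Top}$.

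Next I would handle the structural reduction. Every cofibration in $\sCat_{\Top}$ is a retract of a cellular one (a transfinite composition of pushouts of the generating cofibrations $\Theta_{\bullet}\mathrm{d}_{\ast}\partial\Delta^{n}\to\Theta_{\bullet}\mathrm{d}_{\ast}\Delta^{n}$), and effective monomorphisms are stable under retracts; so it is enough to treat cellular cofibrations. For those I would argue that, degree-wise, the maps in question are inclusions of topological categories that are injective on objects and on morphism spaces and that are furthermore \emph{closed} inclusions — this is where the description $\B_{m}=(\A_{m}\sqcup_{\Theta Z_{m}}\Theta W'_{m})\sqcup\bigsqcup_{\beta}\Theta(\Delta^{n})$ from Lemma \ref{topcof2} is used, together with the fact that $\Theta Z_{m}\to\Theta W'_{m}$ admits a section. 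An inclusion $\A_{m}\hookrightarrow\B_{m}$ in $\Cat_{\Top}$ which is bijective onto a full-on-the-relevant-part, object-wise-split subcategory is an equalizer of the two insertions into the pushout $\B_{m}\sqcup_{\A_{m}}\B_{m}$: concretely, an object (resp. morphism) of $\B_{m}$ has equal images in the two copies precisely when it comes from $\A_{m}$, because outside the image of $\A_{m}$ the pushout genuinely doubles the cells. One then checks this equality of subspaces is also a topological (subspace) identification, using that $\Cat_{\Top}$ is built on compactly generated spaces and the inclusions are closed.

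The main obstacle I anticipate is the point-set bookkeeping in $\Cat_{\Top}$: verifying that the equalizer computed in $\Cat_{\Top}$ (which involves equalizers of the underlying object sets and of the morphism \emph{spaces}) really recovers $\A_{m}$ with its given topology, rather than with a possibly finer or coarser one. This is exactly the kind of subtlety that makes ``monomorphism'' weaker than ``effective monomorphism'' in a topological setting, so the argument must genuinely use that cellular cofibrations are built from the specific generating maps $\Theta(C^{\beta})\to\Theta(\Delta^{n})$ and $\Theta(D^{\beta})\to\Theta(\Delta^{n})$, whose geometric realizations are honest closed cofibrations of spaces, and that pushouts and transfinite compositions of such maps stay closed. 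Once closedness and the object-level equalizer description are in hand, the identification of topologies is routine, and stability under retracts finishes the proof. I would also remark that this lemma, together with \ref{celltop1} and \ref{celltop2}, completes the verification of the cellularity axioms and hence yields Theorem \ref{cellulairescatop}.
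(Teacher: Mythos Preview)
Your plan is sound, but the paper takes a much shorter route and avoids most of the machinery you propose. Rather than reducing to cellular cofibrations via stability of effective monomorphisms under retracts, the paper argues directly for an arbitrary cofibration $i:\C_{\bullet}\hookrightarrow\D_{\bullet}$, using only that $i$ is a degree-wise inclusion of topological categories (Corollary~\ref{topcof3}). For the object part, the paper observes that $\Ob:\sCat_{\Top}\to\sSet$ has both adjoints and hence preserves the relevant pushout and equalizer; since cofibrations are injective on objects and $\sSet$ is already known to be cellular, the equalizer on objects is $\Ob\C_{\bullet}$. For the morphism part, the paper simply notes that both insertions $i_{1},i_{2}:\D_{\bullet}\hookrightarrow\D_{\bullet}\sqcup_{\C_{\bullet}}\D_{\bullet}$ are again inclusions, so a morphism $\gamma$ of $\D_{\bullet}$ with $i_{1}(\gamma)=i_{2}(\gamma)$ must already lie in $\C_{\bullet}$. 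No transfinite induction, no explicit description of $\B_{m}$ from Lemma~\ref{topcof2}, and no closedness argument enter.

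What each approach buys: the paper's argument is slicker and treats all cofibrations at once, but it leans on two points it leaves implicit---that $i_{1},i_{2}$ are inclusions and that the equalizer in $\Cat_{\Top}$ carries the expected topology (which follows because equalizers in $\Top$ are subspaces and ``inclusion of topological categories'' already means subspace topology on hom-spaces). Your approach unpacks more structure and makes the point-set issue explicit, which is pedagogically useful, but the reductions to cellular cofibrations and the closedness discussion are unnecessary once one realises that the subspace topology on $\A_{m}(x,y)\subset\B_{m}(x,y)$ automatically matches the equalizer topology; closedness plays no role. If you streamline your write-up by dropping the retract/cellular reduction and the closedness digression, you essentially recover the paper's proof.
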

\begin{proof}
 Let $\xymatrix{ \C_{\bullet}  \ar@{^{(}->}[r]^-{i}  & \D_{\bullet}}$ be any cofibration in $\sCat_{\Top}$ (in particular it is an inclusion of categories). The goal is to compute the equalizer of the following diagram:
      $$\xymatrix{
\D_{\bullet}\ar@<1ex>[r] \ar[r] & \D_{\bullet}\bigsqcup_{\C_{\bullet}} \D_{\bullet}
}$$
where the two maps are inclusions of categories coming form the following pushout diagram:
   $$\xymatrix{
 \C_{\bullet} \ar@{^{(}->}[r]^-{i}\ar@{^{(}->}[d]^-{i} & \D_{\bullet}\ar@{^{(}->}[d]^{i_{1}}\\
  \D_{\bullet}\ar@{^{(}->}[r] _{i_{2}}& \D_{\bullet}\bigsqcup_{\C_{\bullet}} \D_{\bullet}
     }$$
     We claim that the equalizer is given exactly by  
     $$\xymatrix{
\C_{\bullet}\ar[r]^-{i} & \D_{\bullet}\ar@<1ex>[r]\ar[r] & \D_{\bullet}\bigsqcup_{\C_{\bullet}} \D_{\bullet}
}$$
First of all, we remark that is a commutative diagram. Suppose that $\C^{'}_{\bullet}$ is an other candidate for the equalizer. Since the functor $\Ob:\sCat\rightarrow\sSet$ commutes with (co)limits ( $\Ob$ admits a left and a right adjoint), There exists a unique map $t$ which makes the following diagram be commutative:
$$\xymatrix{
\Ob\C_{\bullet}^{'}\ar@{.>}[d]^{t} \ar[rd]^{\Ob(F)} & \\
\Ob\C_{\bullet}\ar[r]^-{\Ob(i)}& \Ob\D_{\bullet}\ar@<1ex>[r] \ar[r] & \Ob\D_{\bullet}\bigsqcup_{\Ob\C_{\bullet}} \Ob\D_{\bullet}
}$$
Indeed, the cofibrations in $\sCat_{\Top}$ are injective at the level of objects \ref{topcof3}, and $\sSet$ is cellular \cite{Hirs}.
Now, let $\gamma$ be a morphism in $\C^{'}_{\bullet}$ such that  $i_{1}F(\gamma)=i_{2}F(\gamma)$. Since $i_{1}:\C_{\bullet}\rightarrow\D_{\bullet}\bigsqcup_{\C_{\bullet}}\D_{\bullet}$ and $i_{2}:\C_{\bullet}\rightarrow\D_{\bullet}\bigsqcup_{\C_{\bullet}}\D_{\bullet}$ are inclusions of categories, this implies that  $F(\gamma)$ is a morphism in  $\C_{\bullet}$.
We conclude that any morphism $F:\C^{'}_{\bullet}\rightarrow \D_{\bullet}$ in $\sCat_{\Top}$ such that $i_{1}F=i_{2}F$ is uniquely  factored as a composition: 
$$ \C^{'}_{\bullet}\rightarrow \C_{\bullet}\rightarrow \D_{\bullet}.$$
\end{proof}
\begin{corollary}\label{cellulairescatop}
The model category $\sCat_{\Top}$ is cellular.
\end{corollary}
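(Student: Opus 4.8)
The plan is to verify directly the three defining conditions of a cellular model category in the sense of Hirschhorn \cite{Hirs}: (i) there is a set of generating cofibrations whose domains and codomains are compact; (ii) there is a set of generating acyclic cofibrations whose domains are small relative to the cofibrations; and (iii) every cofibration is an effective monomorphism. All three are in fact already established in the lemmas of this section, so the proof amounts to assembling them. First I would invoke Theorem \ref{scattop}, which gives that $\sCat_{\Top}$ with the diagonal model structure is cofibrantly generated, with generating cofibrations $\Theta_{\bullet}\mathrm{d}_{\ast}\partial\Delta^{n}\rightarrow\Theta_{\bullet}\mathrm{d}_{\ast}\Delta^{n}$ and generating acyclic cofibrations $\Theta_{\bullet}\mathrm{d}_{\ast}\Lambda^{n}_{i}\rightarrow\Theta_{\bullet}\mathrm{d}_{\ast}\Delta^{n}$; in particular the ambient category is complete and cocomplete and has the requisite sets of generators.

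Condition (i) is exactly Lemma \ref{celltop1}: the domains and codomains $\Theta_{\bullet}\mathrm{d}_{\ast}X$ of the generating cofibrations are compact, the argument there transferring compactness of $X$ in $\sSet$ across the adjunction $(\Theta_{\bullet}\mathrm{d}_{\ast},\diag\Psi_{\bullet})$ using that $\diag\Psi_{\bullet}$ sends a cellular cofibration to a transfinite composite of cofibrations of simplicial sets (via Lemma \ref{topcof1}/\ref{topcof3}). Condition (ii) is Lemma \ref{celltop2}: the domains $\Theta_{\bullet}\mathrm{d}_{\ast}X$ of the generating acyclic cofibrations are small relative to the subcategory of cellular cofibrations, again by passing the smallness of $X$ (all simplicial sets are small) through the same adjunction together with the fact, from Lemma \ref{scattop5}, that $\Psi_{\bullet}$ — and hence $\diag\Psi_{\bullet}$ — commutes with directed colimits. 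Condition (iii) is Lemma \ref{celltop3}: every cofibration in $\sCat_{\Top}$ is an effective monomorphism, which is proved by exhibiting $\C_{\bullet}$ as the equalizer of the two inclusions $\D_{\bullet}\rightrightarrows\D_{\bullet}\sqcup_{\C_{\bullet}}\D_{\bullet}$, using that $\Ob$ preserves limits and colimits, that cofibrations are levelwise inclusions of topological categories injective on objects (Corollary \ref{topcof3}), and that $\sSet$ is itself cellular.

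Putting these three together, all of Hirschhorn's axioms for cellularity are satisfied, so $\sCat_{\Top}$ with the diagonal model structure is a cellular model category. The only genuine subtlety — and it is already absorbed into the cited lemmas rather than into the present corollary — is the proof of the effective monomorphism condition in Lemma \ref{celltop3}, since that is where one must use in an essential way both the levelwise-inclusion description of cofibrations and the exactness of $\Ob$; the compactness and smallness arguments, by contrast, are formal consequences of the adjunction and of Lemma \ref{scattop5}. Hence the present proof is simply: combine Lemmas \ref{celltop1}, \ref{celltop2} and \ref{celltop3} with the cofibrant generation from Theorem \ref{scattop}.
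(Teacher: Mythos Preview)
Your proposal is correct and matches the paper's approach exactly: the paper states the corollary without further proof, leaving it as the evident combination of Lemmas \ref{celltop1}, \ref{celltop2} and \ref{celltop3} together with the cofibrant generation from Theorem \ref{scattop}. Your write-up simply makes this assembly explicit.
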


%%%%%%%%%%%%%%%%%%%%%%%%%%%%%%%%%%%%%%%%%%%%%%%%%%%
%%%%%%%%%%%%%%%%%%%%%%%%%%%%%%%%%%%%%%%%%%%%%%%%%%%
%%%%%%%%%%%%%% infinity groupoids%%%%%%%%%%%%%%%%%%%%%%%%%%%%
%%%%%%%%%%%%%%%%%%%%%%%%%%%%%%%%%%%%%%%%%%%%%%%%%%%

\section{Model structure on the category of $\infty$-groupoids}\label{section4}
In this section we will prove the existence of a natural cofibrantly model structure on the categories of 
$\infty$-groupoids. 
\begin{definition}\label{groupoid}
Let $\C$ be a topological category, we will say that $\C$ is an $\infty$-groupoid if $\pi_{0}\C$ (the associated homotopy category) is a groupoid.
\end{definition}\label{defgroup}
For every topological category $\D$ we can associate its underlying $\infty$-groupoid $G\D$ by the following pullback diagram:
$$\xymatrix{
   G\D=\iso~\pi_{0}\C\times_{\pi_{0}\D} \D \ar[r]\ar[d]  & \D \ar[d] \\
    \iso~\pi_{0}\D \ar[r]& \pi_{0}\D.
  }
  $$
  \begin{notation}
The category of small $\infty$-groupoids will be denoted by $\infty-\mathbf{Grp}$.

\end{notation}

\begin{lemma}\label{fundamental2}
Let $f: \C\rightarrow \D$ be a map of $\infty-groupoids$, then $f$ is a Dweyer-Kan equivalence of topological categories \cite{Amrani1} if and only if $\Psi f$ is a weak equivalence in $\sSet_{\mathbf{K}}$.
\end{lemma}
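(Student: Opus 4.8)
The plan is to factor $\Psi=k^{!}\circ\widetilde{\N}\circ\sing$ and transport the property of being a weak equivalence along a chain of biconditionals, using at each stage that the functor in question is half of a Quillen equivalence — or at least a right Quillen functor between fibrant objects — so that it preserves weak equivalences between fibrant objects (Ken Brown's lemma) and, where necessary, reflects them. The ``only if'' direction is then soft and does not use the hypothesis that $\C,\D$ are $\infty$-groupoids: if $f$ is a Dwyer--Kan equivalence then $\sing\C,\sing\D$ are fibrant and $\sing f$ is a weak equivalence between them (every topological category is fibrant and $\sing$ is right Quillen), and applying the right Quillen functors $\widetilde{\N}$ and then $k^{!}$, both of which preserve fibrancy, shows that $\Psi f=k^{!}\widetilde{\N}\sing f$ is a weak equivalence in $\sSet_{\mathbf{K}}$.

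For the converse I would argue through three biconditionals. First, since $(|-|,\sing)$ and $(\Xi,\widetilde{\N})$ are Quillen equivalences and $\C,\D$ (hence $\sing\C,\sing\D$) are fibrant, the right adjoints $\sing$ and $\widetilde{\N}$ reflect weak equivalences between the relevant fibrant objects; thus $f$ is a Dwyer--Kan equivalence if and only if $\widetilde{\N}\sing f$ is a weak equivalence in $\sSet_{\mathbf{Q}}$. Second — and this is the one place the hypothesis is used — the homotopy category of the quasi-category $\widetilde{\N}\sing\C$ is $\pi_{0}\C$, a groupoid by assumption, so $\widetilde{\N}\sing\C$ and $\widetilde{\N}\sing\D$ are not merely quasi-categories but Kan complexes (a quasi-category is a Kan complex exactly when its homotopy category is a groupoid); and on maps between Kan complexes the Joyal and the classical weak equivalences agree — both reduce to homotopy equivalence of Kan complexes — so $f$ is a Dwyer--Kan equivalence if and only if $\widetilde{\N}\sing f$ is a weak equivalence in $\sSet_{\mathbf{K}}$.

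Third, I would compare this with $\Psi f=k^{!}\widetilde{\N}\sing f$ via the natural comparison map $\rho_{X}\colon k^{!}X\to X$ (induced by the inclusions $\Delta^{n}\hookrightarrow k_{!}\Delta^{n}$), which is a weak equivalence in $\sSet_{\mathbf{K}}$ whenever $X$ is a Kan complex — this is precisely the relevant instance of the statement of \cite{Amrani1} that $k^{!}$ recaptures the homotopy type of the underlying $\infty$-groupoid, noting that a genuine $\infty$-groupoid $\C$ satisfies $\C'=\C$. In the commutative square
\[
\xymatrix{
k^{!}\widetilde{\N}\sing\C \ar[r]^-{\Psi f}\ar[d]_-{\rho} & k^{!}\widetilde{\N}\sing\D \ar[d]^-{\rho} \\
\widetilde{\N}\sing\C \ar[r]_-{\widetilde{\N}\sing f} & \widetilde{\N}\sing\D
}
\]
both vertical maps are weak equivalences in $\sSet_{\mathbf{K}}$, so by two-out-of-three $\Psi f$ is a weak equivalence in $\sSet_{\mathbf{K}}$ if and only if $\widetilde{\N}\sing f$ is; combined with the two previous biconditionals, this proves the lemma.

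The main obstacle, and the sole point at which being an $\infty$-groupoid is needed, is the input that $\widetilde{\N}\sing\C$ is a Kan complex and that $\rho$ restricts to a weak equivalence there: for a general topological category $\widetilde{\N}\sing\C$ is only a quasi-category and $k^{!}$ genuinely alters its homotopy type, so the equivalence fails without that hypothesis. Everything else is a formal chase of Quillen adjunctions and Ken Brown's lemma.
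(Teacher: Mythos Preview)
Your argument is correct and follows essentially the same route as the paper's proof: both directions hinge on the facts that $\widetilde{\N}\sing\C$ is a Kan complex when $\C$ is an $\infty$-groupoid, that Joyal and Kan weak equivalences coincide on Kan complexes (the paper phrases this via $\sSet_{\mathbf{K}}$ being a left Bousfield localization of $\sSet_{\mathbf{Q}}$), and that the Quillen equivalences $(\Xi,\widetilde{\N})$ and $(|-|,\sing)$ reflect weak equivalences between fibrant objects. The only cosmetic difference is that the paper routes the comparison $k^{!}\widetilde{\N}\sing\C\to\widetilde{\N}\sing\C$ through Joyal's maximal Kan subcomplex functor $J$ (noting $J\widetilde{\N}\sing\C=\widetilde{\N}\sing\C$ here), whereas you use the natural map $\rho$ directly; these agree on Kan complexes, so the substance is the same.
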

\begin{proof}
Suppose that $f$ is a Dwyer-Kan equivalence. We know that $\Psi$ is a right Quillen functor and all objects in $\Cat_{\Top}$ are fibrant. We conclude that $\Psi f$ is a weak equivalence in  $\sSet_{\mathbf{K}}$.
Conversely, suppose that $\Psi f$ ($k^{!}\widetilde{\N}\sing f$) is a weak equivalence in 
$\sSet_{\mathrm{K}}$, we remark that $\Psi \C,  \widetilde{\N}\sing\C,~\Psi \D$ and  $\widetilde{\N}\sing\D$ are Kan complexes since $\C$ and $\D$ are $\infty$-groupoids [\cite{Amrani1}, section 6]. We have the following commutative diagram of weak equivalences [\cite{Amrani1}, section 6]:
$$\xymatrix{
\Psi\C \ar[r]^{\sim} \ar[d]^{\sim} & \Psi\D \ar[d]^{\sim}  \\
    J \widetilde{\N}\sing\C \ar[r]^{\sim} \ar[d]^{id}  & J \widetilde{\N}\sing\D\ar[d]^{id} .\\
    \widetilde{\N}\sing\C\ar[r]^{\sim}  &  \widetilde{\N}\sing\D.
     }
  $$
where $J$ is the Joyal endofunctor on $\sSet$ (more precisely the subcategory of quasi-categories) \cite{Joyalcrm} which associate to each $\infty$-category the biggest Kan sub complex. Moreover the maps  $\Psi \C\rightarrow  \widetilde{\N}\sing\C$ and  $\Psi \D\rightarrow  \widetilde{\N}\sing\D$ are trivial fibrations in $\sSet_{\mathbf{K}}$. 
 But $\sSet_{\mathbf{K}}$ is a left Bousfield localization [\cite{Joyalcrm}, proposition 6.15 ] of $\sSet_{\mathbf{Q}}$, it means that 
$ \widetilde{\N}\sing\C\rightarrow \widetilde{\N}\sing\D$ is an equivalence of $\infty$-categories and so we conclude that $\sing\C\rightarrow \sing\D$ is a Dwyer-Kan equivalence of simplicial categories, consequently $\C\rightarrow \D$ is a Dwyer-Kan equivalence of topological categories.

\end{proof}

\begin{theorem}[\textbf{D}]\label{modelgroupoid}
The adjunction  $(\Theta, \Psi)$ induces a cofibrantly generated model structure on  $\infty-\mathbf{Grp}$, where
\begin{enumerate}
\item a morphism $f:\C\rightarrow \D$ in $\infty-groupoids$  is a weak equivalence (fibration) if 
$$ \Psi f: \Psi\C\rightarrow\Psi\D$$ is a weak equivalence (fibration) in $\sSet_{\mathbf{K}}$,
\item The generating acyclic cofibrations are given by $\Theta\Lambda_{i}^{n}\rightarrow \Theta\Delta^{n}$, for all $0\leq n$ and $0\leq i\leq n$,
\item The generating cofibrations are given by $\Theta\partial\Delta^{n}\rightarrow \Theta\Delta^{n}$, for all $0\leq n$. 
\end{enumerate}
\end{theorem}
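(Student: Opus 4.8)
Proof proposal for Theorem \ref{modelgroupoid}.

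The plan is to obtain the model structure by transferring the Kan model structure on $\sSet_{\mathbf{K}}$ along the adjunction $(\Theta,\Psi)$ through the transfer Lemma \ref{lem1}, following the blueprint of the proof of Theorem \ref{scattop} but in the plain (non-simplicial) setting, where the analysis of pushouts is much shorter. First I would record two structural facts. The underlying $\infty$-groupoid functor $G$ of Definition \ref{defgroup} is right adjoint to the inclusion $\infty-\mathbf{Grp}\hookrightarrow\Cat_{\Top}$: a functor $\C\rightarrow\D$ whose source $\C$ is an $\infty$-groupoid sends every morphism to one which becomes invertible in $\pi_{0}\D$, hence factors uniquely through the subcategory $G\D=\iso\,\pi_{0}\D\times_{\pi_{0}\D}\D$. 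Thus $\infty-\mathbf{Grp}$ is coreflective in $\Cat_{\Top}$; in particular it is complete and cocomplete, with colimits computed as in $\Cat_{\Top}$ and limits obtained by applying $G$ to the ambient limit. Secondly, $\Theta=|-|\circ\Xi\circ k_{!}$ factors through $\infty-\mathbf{Grp}$: the functor $k_{!}$ is a groupoidification (left adjoint to the ``core'' functor $k^{!}$), so $k_{!}X$ is an $\infty$-groupoid, and both $\Xi$ and $|-|$ preserve the property of having a groupoid as homotopy category. Consequently $(\Theta,\Psi)$ restricts to an adjunction between $\sSet_{\mathbf{K}}$ and $\infty-\mathbf{Grp}$, and I would apply Lemma \ref{lem1} with $\M=\sSet_{\mathbf{K}}$, $F=\Psi$, $G=\Theta$, $\mathrm{I}=\{\partial\Delta^{n}\rightarrow\Delta^{n}\}$ and $\mathrm{J}=\{\Lambda_{i}^{n}\rightarrow\Delta^{n}\}$.

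Next I would verify the four hypotheses of Lemma \ref{lem1}. Hypothesis (2) is essentially Lemma \ref{scattop5}: $\Psi=k^{!}\widetilde{\N}\sing$ commutes with directed colimits, and since directed colimits in $\infty-\mathbf{Grp}$ are computed as in $\Cat_{\Top}$, this persists after restriction. Hypothesis (3) is the standard fact that a transfinite composite of weak equivalences in $\sSet_{\mathbf{K}}$ is a weak equivalence. For the smallness hypothesis (1), I would argue exactly as in Lemma \ref{celltop2}: given a $\Theta(\mathrm{I})$-cell complex $\C^{0}\rightarrow\C^{1}\rightarrow\cdots$ (resp. a $\Theta(\mathrm{J})$-cell complex), a map $\Theta\partial\Delta^{n}\rightarrow\colim_{s}\C^{s}$ (resp. $\Theta\Lambda_{i}^{n}\rightarrow\colim_{s}\C^{s}$) corresponds by adjunction to a map $\partial\Delta^{n}\rightarrow\Psi(\colim_{s}\C^{s})\cong\colim_{s}\Psi\C^{s}$ (resp. $\Lambda_{i}^{n}\rightarrow\colim_{s}\Psi\C^{s}$); since every simplicial set is small, such a map factors through a finite stage, and transporting back along the adjunction gives the required smallness.

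The only genuinely substantive hypothesis is (4), and here the plain-categorical setting makes it immediate. Given $j\colon\Lambda_{i}^{n}\rightarrow\Delta^{n}$ and a pushout of $\Theta j$ along an arbitrary morphism $f\colon\Theta\Lambda_{i}^{n}\rightarrow\C$ in $\infty-\mathbf{Grp}$, this pushout coincides with the one taken in $\Cat_{\Top}$, by coreflectivity. Since $\Theta$ is left Quillen, $\Theta j$ is a trivial cofibration in $\Cat_{\Top}$, hence so is its pushout $\C\rightarrow\D$; and because every object of $\Cat_{\Top}$ is fibrant while $\Psi$ is right Quillen, $\Psi\C\rightarrow\Psi\D$ is a weak equivalence in $\sSet_{\mathbf{K}}$, i.e. $\C\rightarrow\D$ lies in $\mathrm{W}$. (Equivalently, this is the case $A=\Lambda_{i}^{n}$, $B=\Delta^{n}$ of Lemma \ref{scattop1}.) Lemma \ref{lem1} then yields the cofibrantly generated model structure with generating cofibrations $\Theta(\mathrm{I})$ and generating trivial cofibrations $\Theta(\mathrm{J})$, and with $\Psi$-detected weak equivalences and fibrations --- precisely assertions (1)--(3) of the theorem. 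To finish, I would invoke Lemma \ref{fundamental2} to identify the class of weak equivalences with the Dwyer--Kan equivalences of topological categories.

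I expect the only delicate points to be the two preliminaries: that $\infty-\mathbf{Grp}$ is complete and cocomplete with colimits inherited from $\Cat_{\Top}$, and that $\Theta$ genuinely lands in $\infty-\mathbf{Grp}$. Once these are secured, every remaining verification runs parallel to, and is strictly simpler than, the corresponding step in the proof of Theorem \ref{scattop}, since there are no bisimplicial degeneracies to track and the generating cofibrations are built directly from $\partial\Delta^{n}\rightarrow\Delta^{n}$ rather than from their $\mathrm{d}_{\ast}$-images.
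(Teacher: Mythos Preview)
Your proposal is correct and follows essentially the same approach as the paper: both transfer the model structure via Lemma~\ref{lem1}, with condition~(4) reduced to the observation that $\Theta j$ is a trivial cofibration in $\Cat_{\Top}$, its pushout is therefore a Dwyer--Kan equivalence between fibrant objects, and $\Psi$ (being right Quillen) sends this to a weak equivalence. Your use of coreflectivity to organize the (co)completeness of $\infty\text{-}\mathbf{Grp}$ and the computation of its colimits in $\Cat_{\Top}$ is slightly cleaner than the paper's more ad~hoc argument via $\pi_{0}$, but the substance is the same.
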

\begin{proof}
The category $\infty-\mathbf{Grp}$ is complete by definition and cocomplete because the functor 
$\pi_{0}: \Cat_{\Top}\rightarrow \Cat$ commutes with colimits (has a right adjoint) and the category $\mathbf{Grp}$ (classical groupoids) is cocomplete. Moreover $\Theta$ takes any simplicial set to an $\infty$-groupoid since it commutes with colimits and $\Theta(\Delta^{n})$ is obviously an $\infty$-groupoid. Following  lemma \ref{lem1}, we have to check only the condition 4. Let us take a generating acyclic cofibrantion $\Theta\Lambda_{i}^{n}\rightarrow \Theta\Delta^{n}$, we know that is a Dwyer-Kan equivalence and a cofibration of topological categories since $\Theta$ is a left Quillen functor. If we consider the following pushout in $\infty-\mathbf{Grp}$:

$$\xymatrix{
\Theta\Lambda_{i}^{n} \ar[r] \ar[d]^{\sim}& \C \ar[d]^{f} \\
   \Theta\Delta^{n} \ar[r]  & \D
    }$$
we can deduce that $f$ is a Dwyer-Kan equivalence of topological categories since $\Cat_{\Top}$
has the appropriate model structure \cite{Amrani1}. Finally, we conclude by lemma \ref{fundamental2} 
that $\Psi\C\rightarrow \Psi\D$ is a weak equivalence in $\sSet_{\mathbf{K}}$.
\end{proof}

\begin{remark}
We don't know if the category $\infty-\mathbf{Grp}$ is left proper, but it is right proper for obvious raisons. 
\end{remark}
\begin{theorem}[\textbf{Grothendieck homotopy hypothesis}]\label{equivalence}
The Quillen adjunction  
$$ \xymatrix{\sSet_{\mathbf{K}} \ar@<2pt>[r]^{ \Theta} & \infty-\mathbf{Grp} \ar@<2pt>[l]^{\Psi}}$$ 
induces a Quillen equivalence.
\end{theorem}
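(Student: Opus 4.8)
The plan is to verify the standard recognition criterion for Quillen equivalences: since $(\Theta,\Psi)$ is a Quillen adjunction, it is a Quillen equivalence as soon as $\Psi$ reflects weak equivalences between fibrant objects and, for every cofibrant $X\in\sSet_{\mathbf{K}}$, the derived unit $X\to\Psi((\Theta X)^{\mathrm{fib}})$ is a weak equivalence. The first condition is immediate: every object of $\infty-\mathbf{Grp}$ is fibrant (it is a topological category) and, by the very definition of the model structure of Theorem~\ref{modelgroupoid}, a map $f$ is a weak equivalence exactly when $\Psi f$ is, so $\Psi$ reflects all weak equivalences. Every object of $\sSet_{\mathbf{K}}$ is cofibrant, and since $\Theta X$ is already fibrant it needs no fibrant replacement; so everything reduces to showing that the unit $\eta_X\colon X\to\Psi\Theta X$ is a weak equivalence in $\sSet_{\mathbf{K}}$ for every simplicial set $X$.

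Next I would split this through the intermediate Quillen equivalences. Put $\Xi'=|-|\circ\Xi\colon\sSet_{\mathbf{Q}}\to\Cat_{\Top}$ and $\widetilde{\N}'=\widetilde{\N}\circ\sing\colon\Cat_{\Top}\to\sSet_{\mathbf{Q}}$, so that $(\Xi',\widetilde{\N}')$ is a Quillen equivalence (a composite of the Quillen equivalences $(\Xi,\widetilde{\N})$ and $(|-|,\sing)$), and note that $\Theta=\Xi'\circ k_!$ and $\Psi=k^!\circ\widetilde{\N}'$. Because $\Theta X$ is an $\infty$-groupoid, the simplicial set $\widetilde{\N}'\Theta X=\widetilde{\N}\sing\Theta X$ is a Kan complex and the natural map $p\colon\Psi\Theta X=k^!\widetilde{\N}'\Theta X\to\widetilde{\N}'\Theta X$ is a weak equivalence in $\sSet_{\mathbf{K}}$ (these are exactly the facts about $\infty$-groupoids used in the proof of Lemma~\ref{fundamental2}, from \cite{Amrani1}). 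By the $2$-out-of-$3$ property it is therefore enough to show that $w_X:=p\circ\eta_X\colon X\to\widetilde{\N}'\Theta X$ is a weak equivalence. Unwinding the units and using naturality of the canonical transformation $k^!\to\mathrm{id}_{\sSet}$ together with the triangle identities, $w_X$ factors as $X\xrightarrow{c_X}k_!X\xrightarrow{\eta'_{k_!X}}\widetilde{\N}'\Xi'k_!X=\widetilde{\N}'\Theta X$, where $\eta'$ is the unit of $(\Xi',\widetilde{\N}')$ and $c_X\colon X\to k_!X$ is the evident natural map. The second factor $\eta'_{k_!X}$ is precisely the derived unit of the Quillen equivalence $(\Xi',\widetilde{\N}')$ at the cofibrant object $k_!X$ — no fibrant replacement is needed since $\Xi'k_!X=\Theta X$ is fibrant — hence a weak equivalence in $\sSet_{\mathbf{Q}}$, and so also in $\sSet_{\mathbf{K}}$ because $\sSet_{\mathbf{K}}$ is a left Bousfield localization of $\sSet_{\mathbf{Q}}$.

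Thus the whole argument reduces to one point, which I expect to be the main obstacle: the natural map $c_X\colon X\to k_!X$ is a weak equivalence in $\sSet_{\mathbf{K}}$ for every $X$. Both $\mathrm{id}$ and $k_!$ are left Quillen endofunctors of $\sSet_{\mathbf{K}}$ — for $k_!$, compose $k_!\colon\sSet_{\mathbf{K}}\to\sSet_{\mathbf{Q}}$ with the localization $\sSet_{\mathbf{Q}}\to\sSet_{\mathbf{K}}$ — so they preserve colimits and monomorphisms. On a representable $\Delta^n$ the map $c_{\Delta^n}\colon\Delta^n\to k_!\Delta^n$ has weakly contractible source and target ($k_!\Delta^n$ is the nerve of the contractible groupoid on $n+1$ objects), hence is a weak equivalence. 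A cell-attachment induction over the skeleta — attaching $\Delta^n$ along $\partial\Delta^n$ produces homotopy pushouts since both functors send monomorphisms to cofibrations, and weak equivalences in $\sSet_{\mathbf{K}}$ are stable under transfinite composition along cofibrations — then propagates this to a weak equivalence $c_X$ for arbitrary $X$. Combining the three weak equivalences above gives that $\eta_X$ is a weak equivalence, which establishes the remaining condition and finishes the proof. The only genuinely fiddly bookkeeping is the verification that $w_X$ factors as $\eta'_{k_!X}\circ c_X$.
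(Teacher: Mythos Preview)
Your argument is correct and takes a genuinely different route from the paper's own proof. The paper proceeds indirectly: it first shows that the \emph{auxiliary} right adjoint $\widetilde{\N}\sing\colon\infty\text{-}\mathbf{Grp}\to\sSet_{\mathbf{K}}$ is a right Quillen functor, then proves it induces an equivalence of homotopy categories by a mapping-space computation (fully faithful) together with the composite $X\to k_!X\to\widetilde{\N}\sing\Theta X$ (essentially surjective), and only at the very end transfers the conclusion to $\Psi=k^{!}\widetilde{\N}\sing$ via the trivial fibration $k^{!}\widetilde{\N}\sing\C\to\widetilde{\N}\sing\C$. In fact the paper opens its proof by asserting that one \emph{cannot} argue directly via unit and counit; your proof shows that one can, provided one uses the ``reflects weak equivalences $+$ derived unit'' criterion rather than ``unit $+$ counit'', the point being that $\Psi$ reflects weak equivalences tautologically from the transferred model structure, so no control over the counit (and hence no cofibrant replacement in $\infty\text{-}\mathbf{Grp}$) is needed.

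The two proofs share the same analytic ingredients --- the Joyal equivalence $c_X\colon X\to k_!X$ in $\sSet_{\mathbf{K}}$, the Quillen equivalence $(\Xi',\widetilde{\N}')$, and the comparison $k^{!}\widetilde{\N}'\C\to\widetilde{\N}'\C$ for $\infty$-groupoids --- but you assemble them more economically. Your verification that $w_X=\eta'_{k_!X}\circ c_X$ is exactly the mate calculation for the natural transformation $k^{!}\to\mathrm{id}$ conjugate to $\nu\colon\mathrm{id}\to k_!$; this is the ``fiddly bookkeeping'' you flagged, and it does go through. One small remark: your inductive proof of $c_X\simeq\mathrm{id}$ is fine, but the paper simply cites this as a theorem of Joyal, so you could shorten by doing the same.
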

\begin{proof}
We should mention the we can't prove the statement directly i.e., that the unit and the counit are equivalences.
First we prove that the functor $\widetilde{\N}\sing: \infty-\mathbf{Grp} \rightarrow \sSet_{\mathbf{K}}$ is well
defined. Let $\C$ be an infinity groupoid then we know \cite{Amrani1} that $\sing\C$ is a simplicial (fibrant) infinity groupoid and that $\widetilde{\N}\sing\C$ is a Kan complex. Consequently the functor  $\widetilde{\N}\sing$ takes Dwyer-Kan equivalences (fibrations) to equivalences (fibrations) in $\sSet_{\mathbf{K}}$ (since $\sSet_{\mathbf{K}}$ is left Bousfield localization of $\sSet_{\mathbf{Q}}$).
So the functor $\widetilde{\N}\sing$ is a well defined right Quillen functor. On the other hand, let $\C$ and $\D$ in $\infty-\mathbf{Grp}$, we have the following commutative diagram of isomorphisms of (derived) mapping spaces in $\Ho(\sSet_{\mathbf{K}})$:

$$\xymatrix{
\map_{\Cat_{\Top}}(\C,\D) \ar[rr]^-{\sim} \ar[d]^{=} &&  \map_{\sSet_{\mathbf{Q}}}(\widetilde{\N}\sing\C,\widetilde{\N}\sing\D)\ar[d]^{=} \\
  \map_{\infty-\mathbf{Grp}}(\C,\D)\ar[rr]^{f} & & \map_{\sSet_{\mathbf{K}}}(\widetilde{\N}\sing\C,\widetilde{\N}\sing\D)
    }$$
The first isomorphism $$\map_{\Cat_{\Top}}(\C,\D) \rightarrow \map_{\sSet_{\mathbf{Q}}}(\widetilde{\N}\sing\C,\widetilde{\N}\sing\D)$$ comes from the fact that $\widetilde{\N}\sing$ is a Quillen equivalence \cite{bergner}, \cite{lurie}, \cite{Amrani1}. The first equality  
$\map_{\sSet_{\mathbf{K}}}(\widetilde{\N}\sing\C,\widetilde{\N}\sing\D)=\map_{\sSet_{\mathbf{Q}}}(\widetilde{\N}\sing\C,\widetilde{\N}\sing\D)$ is a consequence of the fact that $\widetilde{\N}\sing\D$ is a Kan complex. The second equality \\ $\map_{\Cat_{\Top}}(\C,\D)=\map_{\infty-\mathbf{Grp}}(\C,\D)$ is a consequence of the fact that the model full subcategory $\infty-\mathbf{Grp}$ of $\Cat_{\Top}$ has the same weak equivalences (Dwyer-Kan equivalences \ref{fundamental2}) and $\C$ and $\D$ are infinity groupoids.
We conclude that 
$$ \widetilde{\N}\sing: \Ho(\infty-\mathbf{Grp})\rightarrow\Ho(\sSet_{\mathbf{K}}) $$ 
is fully faithful. Now we prove that $\widetilde{\N}\sing$ is essentially surjective. Recall from 
\cite{Joyalcrm} that for any simplicial set $X$ the the natural transformation $\nu_{X}:X\rightarrow k_{!}X$
is a weak equivalence in  $\sSet_{\mathbf{K}}$, so that the map:
$$X\rightarrow k_{!}(X)\rightarrow\widetilde{\N}\sing |\Xi(k_{!}(X)|$$ is a weak equivalence in  $\sSet_{\mathbf{K}}$ since the second map is the unit map of the adjunction (Quillen equivalence) between
$\Cat_{\Top}$ and $\sSet_{\mathbf{Q}}$ which is a weak equivalence of quasi-categories and so a weak equivalence in $\sSet_{\mathbf{K}}$. But $|\Xi(k_{!}(X)|$ is an infinity groupoid. We conclude that 
$\widetilde{\N}\sing$ is essentially surjective. On an other hand, for any infinity groupoid $\C$ we have that
$k^{!}\widetilde{\N}\sing\C\rightarrow J\widetilde{\N}\sing\C=\widetilde{\N}\sing\C$ is a trivial fibration 
\cite{Joyalcrm},\cite{Amrani1}. Consequently, the functor 
 $$k^{!}\widetilde{\N}\sing: \Ho(\infty-\mathbf{Grp})\rightarrow\Ho(\sSet_{\mathbf{K}}) $$ 
 is an equivalence of homotopical (ordinary) categories and its left adjoint is exactly $|\Xi(k_{!}(-)|$. Finally, we conclude that the adjunction $(\Theta, \Psi)$ induces a Quillen equivalence between 
 $\infty-\mathbf{Grp}$  and $\sSet_{\mathbf{K}}$.
\end{proof}
\begin{remark}
The diagonal model structure on $\sCat_{\Top}$ can be \textit{restricted} to a diagonal model structure on $[\Delta^{op}, \infty-\mathbf{Grp}] $. We are pretty sure that this new model structure is also equivalent to $\sSet_{\mathbf{K}}$. Moreover, it is cellular and left proper.
\end{remark}
\subsection{$\mathbf{n-Groupoids}$}\label{section5}
It is well known that any connected topological space $X$ is (zigzag) equivalent to $\mathcal{B}Y$ where 
Y is a topological monoid group like equivalent to $\Omega X$. 
We explain the same result using homotopy hypothesis i.e., every  topological space is zig-zag equivalent to a topological space of the from 
$$\bigsqcup_{x\in\pi_{0}(X)}\mathcal{B}A_{x}$$ 
where $A_{x}$ is a homotopical group (strict multiplication and the inverses are defined up to homotopy) and $\mathcal{B}$ is the bar construction. 

In order to explain this phenomenon, we should recall the interpretation of the coherent nerve $\widetilde{\N}\sing$ for a topological groupoid $\C$. For simplicity we take $\C$ with one object $x$ and suppose that $\mathrm{End}_{\C}(x,x)$ is a topological group
(in general it is a homotopical group), then the geometric realization of $\widetilde{\N}\sing\C$ is nothing else than a model for $\mathcal{B}\mathrm{End}_{\C}(x,x)$  the Bar construction of $\mathrm{End}_{\C}(x,x)$ i.e.,
$$|\widetilde{\N}\sing\C|\sim \mathcal{B}\mathrm{End}_{\C}(x,x).$$
In general, if $X$ is a topological space we associate the $\infty$-groupoid $\mathbf{G}(X)$  given by the formula  \ref{modelgroupoid}
$$X\mapsto\mathbf{G}(X)=|\Xi~k_{!}~\sing(X)|.$$
By Grothendieck homotopy hypothesis theorem \ref{equivalence}, we know that the unit map is an equivalence and the map $\sing(X)\rightarrow k_{!}~\sing(X)$ is also an equivalence \cite{Joyalcrm}
$$\xymatrix{\sing(X)\ar[r]^-{\sim} & k_{!}~\sing(X)\ar[r]^-{\sim} & \widetilde{\N}\sing \mathbf{G}(X)}$$
is an equivalence. On the other hand, the topological realization of the  coherent nerve  $\widetilde{\N}\sing \mathbf{G}(X)$ is equivalent 
to $$\bigsqcup_{x\in [\mathbf{G}(X)] }\mathcal{B}~\mathrm{End}_{ \mathbf{G}(X)}(x,x)$$ 
where $[\mathbf{G}(X)]$ is  the set of chosen objects $x$ of $ \mathbf{G}(X)$, one in each connected component of $\mathbf{G}(X)$.
finally, we end-up with the following zig-zag equivalence 
$$\xymatrix{X & |\sing(X)|\ar[l]_-{\sim} \ar[r]^-{\sim} & |\widetilde{\N}\sing \mathbf{G}(X) |& \bigsqcup_{x\in[\mathbf{G}(X)] }\mathcal{B}~\mathrm{End}_{ \mathbf{G}(X)}(x,x)\ar[l]_-{\sim} .}$$
\begin{definition}
The category $\mathbf{n-Type}$ is the full subcategory of $\Top$ consisting of spaces with the property that all homotopy groups greater than $n$ are vanishing.
We say that  an $\infty$-groupoid is an $n$-groupoid if it is enriched over topological spaces of type $n-1$. We denote the category of $n$-groupoids by $\mathbf{n-Grp}$. 
\end{definition}
\begin{remark}
We conclude that the homotopy category $\Ho(\mathbf{n-Type})\subset\Ho(\Top)$ of spaces of type $n$
is equivalent to the homotopy category $\Ho(\mathbf{n-Grp})\subset\Ho(\infty-\mathbf{Grp})$ of $n$-groupoids.  
\end{remark}
\bibliographystyle{plain} 
\bibliography{diag}

\end{document}